\documentclass[11pt]{amsart}
\usepackage{amsmath}
\usepackage{amsfonts}
\usepackage{float}
\usepackage{mathrsfs}
\usepackage{mathtools}
\usepackage{latexsym}
\usepackage{amsmath,amsthm,amssymb, amscd,color}
\usepackage{mathrsfs}
\usepackage[all]{xy}
\usepackage[all]{xy}
\usepackage{graphicx}
\usepackage{resizegather}
\usepackage{tikz}
\usepackage{tikz-cd}
\usepackage{breqn}
\usepackage{cleveref}
\usepackage{enumerate}

\usepackage{blkarray} 
\usepackage{framed}

\newtheorem{innercustomgeneric}{\customgenericname}
\providecommand{\customgenericname}{}
\newcommand{\newcustomtheorem}[2]{%
	\newenvironment{#1}[1]
	{%
		\renewcommand\customgenericname{#2}%
		\renewcommand\theinnercustomgeneric{##1}%
		\innercustomgeneric
	}
	{\endinnercustomgeneric}
}

\newcustomtheorem{customthm}{Theorem}
\newcustomtheorem{customlemma}{Lemma}
\newcustomtheorem{customprop}{Proposition}

\theoremstyle{plain}
\newtheorem{theorem}{Theorem}[section]
\newtheorem{proposition}[theorem]{Proposition}
\newtheorem{lemma}[theorem]{Lemma}
\newtheorem{corollary}[theorem]{Corollary}
\numberwithin{equation}{section}

\theoremstyle{definition}
\newtheorem{definition}[theorem]{Definition}
\newtheorem{example}[theorem]{Example}

\newtheorem{remark}[theorem]{Remark}

\theoremstyle{remark}

\newcommand{\RR}{\mathbb{R}}

\newcommand{\NN}{\mathbb{N}}

\newcommand{\SN}{\mathbb{S}}

\begin{document}

\title[Darboux first integrals of Kolmogorov systems with invariant $n$-sphere]{Darboux first integrals of Kolmogorov systems with invariant $n$-sphere}

\author[S. Jana]{Supriyo Jana}
\address{Department of Mathematics, Indian Institute of Technology Madras, India}
\email{supriyojanawb@gmail.com}

\author[S. Sarkar]{Soumen Sarkar}
\address{Department of Mathematics, Indian Institute of Technology Madras, India}
\email{soumen@iitm.ac.in}

\date{\today}
\subjclass[2020]{34A34, 34C40, 34C45, 34C14, 34A05}
\keywords{Kolmogorov vector field, Hamiltonian vector field, invariant $n$-sphere, complete integrability}
\thanks{}

\abstract 
In this paper, we characterize all polynomial Kolmogorov vector fields for which the standard $n$-sphere is invariant. We exhibit completely integrable Kolmogorov vector fields of degree $m$ on $\SN^n$ for any $m >2$. Then, we show that there is no cubic Hamiltonian Kolmogorov vector field that makes an odd-dimensional sphere invariant.  We examine the conditions under which a cubic Kolmogorov vector field has a Darboux first integral. In many cases, we determine whether they constitute necessary and sufficient conditions. Moreover, we study the complete integrability of cubic Kolmogorov vector fields having an invariant $n$-sphere.
\endabstract

\maketitle

\section{Introduction}
Let $R_1,\ldots,R_d$ be polynomials in $\mathbb{R}[x_1,\ldots,x_d]$. Then, the following system of differential equations
\begin{equation} \label{eq: I1}
\frac{{\rm d}x_i}{{\rm d t}} = R_i(x_1, \ldots, x_d),~i = 1,\ldots, d,
\end{equation}
is called a polynomial differential system in $\mathbb{R}^d$. The differential operator
\begin{equation}  \label{eq: I2}
 \chi = \sum_{i=1}^d R_i \frac{\partial}{\partial x_i}    
\end{equation}
is called the vector field associated with the system \eqref{eq: I1}. The number $\max\limits_{1\leq i\leq d} \{ \deg R_i\}$ is called the degree of the polynomial vector field \eqref{eq: I2}.

If $R_1, \ldots, R_d$ are homogeneous polynomials of the same degree, then  $\chi$ is called a homogeneous vector field.

 The system \eqref{eq: I1} is called a polynomial Kolmogorov system in $\RR^d$ if $R_i=x_i Q_i$ for some $Q_1, \ldots, Q_d \in \RR[x_1, \ldots, x_d]$. The associated vector field is called a polynomial Kolmogorov vector field. Some classes of Kolmogorov systems in $\RR^2$ and $\RR^4$ were studied in \cite{tigan2021analysis} and \cite{LlXi17}, respectively. The integrability of a class of Kolmogorov systems in $\RR^n$ has been explored in \cite{LlRaRa20}. The authors and Benny \cite{BJS_24} discussed the phase portraits of cubic Kolmogorov vector fields on $\SN^2$. We note that Kolmogorov systems \cite{kol36} have been found as models of various phenomena in plasma physics \cite{LMQ22}, economics \cite{AFSS94}, and other areas, including ecology. We note that a quadratic Kolmogorov system in $\RR^d$ is known as a Lotka--Volterra system \cite{LlRaRa20, lotka1925}. The latter system has been used to understand the dynamics of predator--prey interactions in ecological systems \cite{AFSS94, saez1999dynamics} and in disease spread \cite{ghasemabadi2021investigating}.

A subset of $\RR^d$ is called invariant for the vector field \eqref{eq: I2} if the set is a union of some trajectories of the vector field.  The zero locus of an irreducible polynomial $f \in \mathbb{R}[x_1, \ldots,x_d]$  is called an irreducible algebraic set of $f$. An irreducible algebraic set of $f$ is invariant if there exists $K\in \RR[x_1, \ldots,x_d]$ such that $\chi f = Kf$. In this setting, $\chi$ is called a polynomial vector field on the hypersurface $\{f=0\}$ and $K$ is referred to as the cofactor.

We note that the study of polynomial vector fields started blooming in the 1970s; see, for instance, \cite{CL90, Go69, Ho79, Jou79}. One can ask for the number of possible invariant algebraic sets for a given vector field, see \cite{bolanos2011number, bolanos2013number, Sukulski1996}.

 Darboux \cite{Dar78} showed the existence of a (rational) first integral (see Definition \ref{def_firs_intg} and the paragraph after that) when the polynomial vector field in $\RR^2$ possesses a sufficient number of invariant algebraic curves. Jouanolou \cite{Jou79} extended similar results in higher dimensions. There are $(d-1)$ functionally independent local first integrals of any vector field in $\RR^d$ around non-singular points, see \cite[Chapter 10] {arnold1992ordinary}. Thus, one can ask whether a polynomial system in $\RR^d$ has a polynomial or a rational first integral. Some more developments on this topic can be found in \cite{CL99,llibre2018darboux, LliZh09}. However, several interesting dynamical properties of polynomial vector fields, such as the Kolmogorov vector fields in $\mathbb{R}^n$, remain to be explored. The existence of enough functionally independent first integrals means that the system is completely integrable, see Definition \ref{def_comp_intg}. Llibre and Bola\~{n}os \cite{LliBol11} showed that polynomial vector fields on a regular algebraic hypersurface having a sufficient number of invariant algebraic hypersurfaces possess a rational first integral. The Darboux theory of integrability for polynomial vector fields on $\SN^n$ has been studied in \cite{llibre2018darboux}. Therefore, it is essential to study various invariant algebraic hypersurfaces for a Kolmogorov vector field on $\SN^n$ and subsequently study their Darboux first integrals.

The paper is organized as follows. Section \ref{sec:basic_on_vf} recalls the definitions of first integral, complete integrability, Hamiltonian vector field, and some basic notions required in this paper.

In Section \ref{sec:vf_on_sn}, first, we prove the following characterization. 
\begin{theorem}\label{thm:kolm-vfld}
    Let $\chi=(P_1, \ldots,P_{n+1})$ be a degree $m$ polynomial Kolmogorov vector field in $\RR^{n+1}$. Then $\chi$ is a vector field on $\SN^n$ if and only if
    \begin{equation}\label{eq:deg-m-form}
            P_i=x_i\Big(\Big(1-\sum\limits_{k=1}^{n+1} x_k^2\Big)\Tilde{f}_i+\sum\limits_{j=1}^{n+1}\Tilde{A}_{ij}x_j^2\Big),~ 1\leq i\leq n+1,
    \end{equation}
where $\Tilde{f}_i, \Tilde{A}_{ij}$ are polynomials with degree less than or equal to $(m-3)$ such that $\Tilde{A}:=(\Tilde{A}_{ij})$ is a skew-symmetric matrix.
\end{theorem}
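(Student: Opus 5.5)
The plan is to treat the two directions separately. Write $P_i=x_iQ_i$ with $\deg Q_i\le m-1$ and $g:=\sum_{k=1}^{n+1}x_k^2-1$. Since $\chi g=2\sum_i x_iP_i=2\sum_i x_i^2Q_i$, invariance of $\SN^n=\{g=0\}$ amounts to the existence of a polynomial $K$ with
\begin{equation*}
\sum_{i=1}^{n+1}x_i^2Q_i=K\,g .
\end{equation*}
Comparing degrees, we may take $\deg K\le m-1$.

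\emph{Sufficiency.} Suppose the $P_i$ have the form \eqref{eq:deg-m-form}. Then
\begin{equation*}
\sum_i x_i^2Q_i=-g\sum_i x_i^2\Tilde f_i+\sum_{i,j}\Tilde A_{ij}x_i^2x_j^2 .
\end{equation*}
The double sum vanishes because $\Tilde A$ is skew-symmetric, so $\chi g=-2\big(\sum_i x_i^2\Tilde f_i\big)g$ and the sphere is invariant.

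\emph{Necessity.} The plan has three steps.

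First, show that the cofactor lies in the monomial ideal $I=(x_1^2,\dots,x_{n+1}^2)$. Rewriting $Kg=K\sum_i x_i^2-K$ gives
\begin{equation*}
K=-\sum_i x_i^2(Q_i-K),
\end{equation*}
so $K\in I$. Because $I$ is a monomial ideal, every monomial of $K$ is divisible by some $x_i^2$. Assigning each monomial to one such $i$ yields $K=-\sum_i x_i^2\Tilde f_i$ with $\deg\Tilde f_i\le \deg K-2\le m-3$. This monomial-by-monomial assignment is exactly what gives the degree bound.

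Second, substitute this back into the invariance identity. Setting $h_i:=Q_i+g\Tilde f_i$, we get
\begin{equation*}
\sum_i x_i^2h_i=0,
\end{equation*}
where $\deg h_i\le m-1$. So $(h_1,\dots,h_{n+1})$ is a syzygy of $x_1^2,\dots,x_{n+1}^2$.

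Third, express this syzygy in Koszul form. The sequence $x_1^2,\dots,x_{n+1}^2$ is regular in $\RR[x_1,\dots,x_{n+1}]$, so its first syzygy module is generated by the Koszul relations $x_j^2e_i-x_i^2e_j$. Hence $h_i=\sum_j\Tilde A_{ij}x_j^2$ for some skew-symmetric matrix $(\Tilde A_{ij})$, which gives $Q_i=-g\Tilde f_i+\sum_j\Tilde A_{ij}x_j^2$. Since $-g=1-\sum_k x_k^2$, this is exactly \eqref{eq:deg-m-form}.

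The main obstacle is making the last step elementary and keeping the bound $\deg\Tilde A_{ij}\le m-3$. I would argue directly by induction on $n$. Setting $x_{n+1}=0$ in the syzygy produces a syzygy in fewer variables. Alternatively, work monomial by monomial: since the ideal and the relation are multigraded, one can decompose $h$ into multihomogeneous components, and each component of a syzygy is a combination of Koszul syzygies of the same multidegree. This forces $\deg\Tilde A_{ij}\le \deg h-2\le m-3$. For $m<3$ the statement degenerates: all $P_i$ must vanish, which the same argument confirms since $\Tilde f_i=0$ and $\Tilde A=0$ in that case.
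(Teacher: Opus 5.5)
Your argument is correct, and it is organized differently from the paper's.

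The paper does not begin with $P_i=x_iQ_i$. It first quotes the general normal form for polynomial fields on $\SN^n$ from \cite[Remark 3.2]{jana2024dynamics}, namely $P_i=(1-\sum_k x_k^2)f_i+\sum_j A_{ij}x_j$ with $(A_{ij})$ skew-symmetric, and only afterwards imposes $x_i\mid P_i$. That ordering costs two applications of Lemma~\ref{lem:sum-pi-xi-0}. The first, with $k=1$, writes $f_i=\Tilde f_ix_i+\sum_j B_{ij}x_j$ once $\sum_i f_ix_i$ is shown to lie in $(x_1^2,\dots,x_{n+1}^2)$. The second, with $k=2$, is applied to the quotients $L_i$ defined by $\sum_j C_{ij}x_j=L_ix_i$, where $C_{ij}=(1-\sum_k x_k^2)B_{ij}+A_{ij}$ is the recombined skew-symmetric matrix.

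You feed the Kolmogorov factorization into the invariance equation from the start, so $\sum_i x_i^2Q_i=Kg$ is already a relation among the $x_i^2$. Your identity $K=-\sum_i x_i^2(Q_i-K)$ is the same ideal-membership step the paper uses; its $\sum_i f_ix_i$ is, up to sign, your cofactor. After that step, a single syzygy computation for $(x_1^2,\dots,x_{n+1}^2)$ finishes the proof.

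The paper's route is modular: it treats Kolmogorov fields as a special case of an existing structure theorem. Yours is shorter and self-contained, needing neither that normal form nor the $k=1$ case of the lemma. It also justifies the bound $\deg\Tilde A_{ij}\le m-3$ explicitly through the multigrading. The paper simply asserts its degree bounds when invoking Lemma~\ref{lem:sum-pi-xi-0}, whose statement carries none.
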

Then, we give a sufficient condition for the existence of a Darboux first integral of a cubic Kolmogorov vector field, see \Cref{thm_cubic_firint}. We construct completely integrable Kolmogorov vector fields of arbitrary degree on $\SN^n$, see \Cref{thm:cmplt-int-vfld}. Next, we prove an important observation.
\begin{theorem}\label{thm_no_Hamilton}
There exists no cubic Kolmogorov vector field in $\RR^{2n}$ that is Hamiltonian and makes $\SN^{2n-1}$ invariant.
\end{theorem}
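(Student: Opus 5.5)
The plan is to combine the normal form of Theorem~\ref{thm:kolm-vfld} with the compatibility conditions that a Hamiltonian vector field must satisfy, and show that these force $\chi\equiv 0$. I use the convention that $\chi=(P_1,\ldots,P_{2n})$ is Hamiltonian in $\RR^{2n}$ if there is a polynomial $H$ with $P_{2i-1}=\partial H/\partial x_{2i}$ and $P_{2i}=-\partial H/\partial x_{2i-1}$ for $1\le i\le n$.

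\textbf{Step 1 (normal form).} For $m=3$, Theorem~\ref{thm:kolm-vfld} forces $\Tilde f_i=a_i$ and $\Tilde A_{ij}=A_{ij}$ to be real constants, with $A=(A_{ij})$ skew-symmetric. Hence
\[
P_i=x_i\Big(a_i\big(1-\textstyle\sum_k x_k^2\big)+\sum_j A_{ij}x_j^2\Big).
\]
A direct computation gives, for $k\neq p$,
\[
\partial P_p/\partial x_p=a_p\big(1-|x|^2\big)-2a_px_p^2+\sum_j A_{pj}x_j^2,\qquad
\partial P_p/\partial x_k=2x_px_k(A_{pk}-a_p).
\]

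\textbf{Step 2 (Hamiltonian compatibility conditions).} Since polynomial $1$-forms on $\RR^{2n}$ are exact when closed, $\chi$ is Hamiltonian iff the equalities of mixed second derivatives of $H$ hold. Fix a pair $p=2i-1$, $q=2i$.
\begin{itemize}
\item The diagonal condition $\partial P_p/\partial x_p+\partial P_q/\partial x_q=0$ gives, from the constant term, $a_p+a_q=0$. The $x_q^2$ coefficient then gives $A_{pq}=2a_q$.
\item For $i\ne j$, the off-diagonal conditions are
\[
\partial P_{2i-1}/\partial x_{2j-1}=-\partial P_{2j}/\partial x_{2i},\quad
\partial P_{2i-1}/\partial x_{2j}=-\partial P_{2j-1}/\partial x_{2i},\quad
\partial P_{2i}/\partial x_{2j}=\partial P_{2j}/\partial x_{2i}.
\]
\item By Step 1, both sides of each condition are multiples of two \emph{different} monomials $x_px_k$. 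So each coefficient must vanish separately, i.e. $A_{pk}=a_p$ for all the relevant index pairs $p\ne k$ lying in different blocks.
\end{itemize}

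\textbf{Step 3 (conclusion).} Combine $A_{pk}=a_p$ and $A_{kp}=a_k$ with skew-symmetry to get $a_p+a_k=0$ whenever $p,k$ lie in different blocks. Take $n\ge2$ and indices from two blocks $i\ne j$. Then $a_{2j-1}=-a_{2i-1}$ from the odd-odd condition. Also $a_{2j-1}=-a_{2i}=a_{2i-1}$, using the odd-even condition and $a_{2i}=-a_{2i-1}$. Hence every $a_p=0$. It follows that $A_{pk}=a_p=0$ across blocks and $A_{pq}=2a_q=0$ within blocks, so $A=0$ and $\chi\equiv0$. This contradicts $\chi$ being of degree $3$.

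\textbf{Main obstacle.} The obstacle is the case $n=1$ ($\SN^1\subset\RR^2$), where there are no cross-block conditions. There the conditions of Step 2 only give $a_2=-a_1$ and $A_{12}=-2a_1$. With these values the field appears to be Hamiltonian, with $H=a_1x_1x_2(1-x_1^2-x_2^2)$. So before writing the argument I would check the paper's precise Hamiltonian convention. Either some convention excludes this example, or the statement is implicitly for $n\ge2$, in which case the argument above goes through. The remaining bookkeeping is to verify that all the needed off-diagonal conditions are genuinely available; this holds for $n\ge2$.
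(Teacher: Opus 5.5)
Your worry about $n=1$ is justified, and the example is a genuine counterexample rather than an artifact of conventions. Use the paper's own convention, $P_{2i-1}=-\partial H/\partial x_{2i}$ and $P_{2i}=\partial H/\partial x_{2i-1}$. Take $P_1=x_1(1-x_1^2-3x_2^2)$ and $P_2=-x_2(1-3x_1^2-x_2^2)$, which is your family with $a_1=1$. This is a cubic Kolmogorov field. It leaves $\SN^1$ invariant with cofactor $-2(x_1^2-x_2^2)$, and it is Hamiltonian with $H=x_1x_2(x_1^2+x_2^2-1)$.

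So the statement as written fails for $n=1$ and must be read for $n\ge 2$. You should state this outright rather than leave it as an open check. The paper's proof assumes it tacitly when it picks $p\neq i$. Its closing step also uses $\alpha_3+\alpha_5=0$, which needs $n\ge 3$. For $n=2$ the paper's argument has to be patched by combining its observations (iv), (a) and (b); this amounts to your odd--even relation.

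For $n\ge 2$ your argument is correct in substance and takes a cleaner route than the paper. The paper integrates $P_{2i-1}$ in $x_{2i}$ to get $H$ up to a function $\phi$ of the other variables. It then differentiates back against $P_{2i}$ and $P_{2p-1}$, and reads off (i)--(iv) and (a)--(c) from the requirement that $\phi$ not depend on $x_{2i-1},x_{2i}$. You instead use the symmetry of the Hessian of $H$ directly, together with the observation that for $k\neq p$ the derivative $\partial P_p/\partial x_k=2x_px_k(A_{pk}-a_p)$ is a single monomial. This gives $A_{pk}=a_p$ for all cross-block pairs at once, avoids any bookkeeping of $\phi$, and your final elimination works uniformly for all $n\ge 2$.

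One slip needs fixing: your list of cross-block conditions is wrong. Write $P_{2i-1}=\varepsilon\,\partial H/\partial x_{2i}$ and $P_{2i}=-\varepsilon\,\partial H/\partial x_{2i-1}$ with $\varepsilon=\pm1$. For blocks $i\neq j$ there are four conditions, one per mixed partial:
\[
\frac{\partial P_{2i}}{\partial x_{2j-1}}=\frac{\partial P_{2j}}{\partial x_{2i-1}},\qquad
\frac{\partial P_{2i}}{\partial x_{2j}}=-\frac{\partial P_{2j-1}}{\partial x_{2i-1}},
\]
\[
\frac{\partial P_{2i-1}}{\partial x_{2j-1}}=-\frac{\partial P_{2j}}{\partial x_{2i}},\qquad
\frac{\partial P_{2i-1}}{\partial x_{2j}}=\frac{\partial P_{2j-1}}{\partial x_{2i}}.
\]
Your second condition has the wrong sign. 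That is harmless, since its two sides involve different monomials.

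Your third condition, $\partial P_{2i}/\partial x_{2j}=\partial P_{2j}/\partial x_{2i}$, is not an integrability condition at all. It equates $\partial^2H/\partial x_{2i-1}\partial x_{2j}$ with $\partial^2H/\partial x_{2j-1}\partial x_{2i}$. It is also exactly a case where both sides are multiples of the same monomial $x_{2i}x_{2j}$, so your ``different monomials'' claim fails for it.

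With the correct list, every condition compares distinct monomials. This gives $A_{pk}=a_p$ for all eight ordered cross-block pairs. In particular, your Step 3 needs $A_{2i,2j-1}=a_{2i}$ to get $a_{2j-1}=-a_{2i}$. That relation comes from the first condition above, which is missing from your list.
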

We study invariant spheres of cubic and quartic homogeneous Kolmogorov vector fields. 
\begin{theorem}\label{thm_inv_grt_deg1}
   Suppose that $\chi$ is a cubic homogeneous Kolmogorov vector field on $\SN^n$ of the form \eqref{eq:deg-m-form}. If $\sum\limits_{i=1}^{n+1}a_ix_i=0$ is an invariant hyperplane of $\chi$ with cofactor $K$, then the following holds.
   \begin{enumerate}[(i)]
       \item $\sum\limits_{i,j=1}^{n+1} a_i\Tilde{A}_{ij}=(a_1+\cdots+a_{n+1}) \times$ (sum of all the coefficients in $K$).
       \item $K(a_1,\ldots,a_{n+1})=0$.
   \end{enumerate}
\end{theorem}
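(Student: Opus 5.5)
The plan is to first pin down the explicit shape of $\chi$ and of the cofactor $K$, and then obtain (i) and (ii) by evaluating the invariance identity $\chi f=Kf$ at two well-chosen points: $(1,\ldots,1)$ and $(a_1,\ldots,a_{n+1})$. Throughout, $f=\sum_{i=1}^{n+1}a_ix_i$ with $(a_1,\ldots,a_{n+1})\neq 0$, since otherwise there is no hyperplane.

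\textbf{Step 1: normal form of $\chi$.} By Theorem \ref{thm:kolm-vfld} with $m=3$, the polynomials $\Tilde f_i$ and $\Tilde A_{ij}$ are constants. The component $P_i$ then contains the degree-one term $x_i\Tilde f_i$. Homogeneity of degree $3$ forces this term to vanish, so $\Tilde f_i=0$ for all $i$. Hence
\[
P_i=x_i\sum_{j=1}^{n+1}\Tilde A_{ij}x_j^2,
\]
with $\Tilde A$ a constant skew-symmetric matrix.

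\textbf{Step 2: homogeneity of $K$.} Here
\[
\chi f=\sum_{i,j}a_i\Tilde A_{ij}x_ix_j^2
\]
is homogeneous of degree $3$, or identically zero. Write $K=\sum_d K_d$ as a sum of homogeneous components. Comparing the components of lowest and highest degree in $Kf$ shows that only $K_2$ can be nonzero. So $K$ is a homogeneous quadratic, or $0$, and the identity reads
\[
\sum_{i,j}a_i\Tilde A_{ij}x_ix_j^2=K(x)\sum_i a_ix_i .
\]

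\textbf{Step 3: evaluation at $(1,\ldots,1)$, giving (i).} Substituting $x=(1,\ldots,1)$, the left side becomes $\sum_{i,j}a_i\Tilde A_{ij}$. The right side becomes $K(1,\ldots,1)(a_1+\cdots+a_{n+1})$. Since $K$ is a polynomial, $K(1,\ldots,1)$ is exactly the sum of its coefficients, which gives (i).

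\textbf{Step 4: evaluation at $x=a$, giving (ii).} Substituting $x=(a_1,\ldots,a_{n+1})$, the left side becomes $\sum_{i,j}a_i^2\Tilde A_{ij}a_j^2$. This vanishes, because it is the quadratic form of the skew-symmetric matrix $\Tilde A$ evaluated at the vector $(a_1^2,\ldots,a_{n+1}^2)$. The right side becomes $K(a)\sum_i a_i^2$, and $\sum_i a_i^2>0$ because $a\neq 0$. Therefore $K(a)=0$.

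The only step needing any care is Step 2, the homogeneity reduction. Steps 3 and 4 are direct substitutions.
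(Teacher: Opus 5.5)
Your proof is correct and follows the paper's argument. As the paper does, you reduce to $P_i=x_i\sum_j\Tilde{A}_{ij}x_j^2$ and then evaluate the invariance identity $\sum_i a_iP_i=K\sum_i a_ix_i$ at $(1,\ldots,1)$ to get (i), and at $(a_1,\ldots,a_{n+1})$ to get (ii), using the skew-symmetry of $\Tilde{A}$ and $\sum_i a_i^2>0$.

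The differences are only cosmetic. You obtain the normal form directly from \Cref{thm:kolm-vfld}, whereas the paper cites \Cref{prop:hom-classify}. You also justify that $K$ is homogeneous, which the paper only asserts, although neither evaluation actually needs this.
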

 
In Section \ref{sec:inv_mer_par}, first, we characterize the invariant hyperplanes of cubic Kolmogorov vector fields on $\SN^n$ with cofactors of the form $k_0+\sum\limits_{i=1}^{n+1}k_ix_i^2$, see \Cref{thm_hypln_cofac}. We provide a necessary and sufficient condition when the function $H=g_{n+2}^{\beta_{n+2}}\prod\limits_{i=1}^{n+1}x_i^{\beta_i}$ is a first integral for certain cubic Kolmogorov vector fields on $\SN^n$, see \Cref{first-integral-ns}.  We show that if a cubic Kolmogorov vector field has two invariant spheres with the centers at origin, then the vector field is homogeneous and the defining polynomials of the spheres are first integrals, see \Cref{prop_fint1}. Finally, we prove the following. 
\begin{theorem}\label{thm_indp_fi}
Let $\chi=(P_1,\ldots,P_{n+1})$ be a cubic Kolmogorov vector field  on $\SN^n$ with
    $$P_i=x_i\Big(\alpha_i\Big(1-\sum_{k=1}^{n+1}x_k^2\Big)+\sum\limits_{j=1}^{n+1}\Tilde{A}_{ij}x_j^2\Big)$$ having an invariant hypersurface $g_{n+2}=0$ with cofactor $K_{n+2}=k_0+\sum\limits_{i=1}^{n+1}k_ix_i^2$ where
    $\alpha_i, k_i, k_0\in \RR$ for $i=1,\ldots,n+1$ and $A=(\Tilde{A}_{ij})$ is a constant skew-symmetric matrix. Suppose that there exist $z_{i1},\ldots,z_{i(n+1)}\in \RR^{n+1}$ with non-zero coordinates such that $\frac{\partial g_{n+2}}{\partial x_i}(z_{ij})$ and $g_{n+2}(z_{ij})$ are non-zero and the vectors $$\begin{pmatrix}
        x_1\frac{\partial g_{n+2}}{\partial x_1} & \ldots&x_{i-1}\frac{\partial g_{n+2}}{\partial x_{i-1}}&x_{i+1}\frac{\partial g_{n+2}}{\partial x_{i+1}}&\ldots&x_{n+1}\frac{\partial g_{n+2}}{\partial x_{n+1}}&-g_{n+2}
    \end{pmatrix}$$ evaluated at $z_{i1},\ldots, z_{i(n+1)}$ are independent where $i,j \in \{1,\ldots,n+1\}$. Then, the vector field has functionally independent first integrals $H_i:=g_{n+2}^{y_{i(n+2)}}\prod_{j=1}^{n+1}x_j^{y_{ij}}$ for $i\in \{1,\ldots,n\}$ if and only if ${\rm rank}(B)\leq 2$, where $B$ is given by \eqref{eq:matrix-b}.
\end{theorem}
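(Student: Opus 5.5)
The plan is to reduce everything to linear algebra on the exponent vectors. For $H=g_{n+2}^{y_{n+2}}\prod_{j=1}^{n+1}x_j^{y_j}$, each coordinate hyperplane $x_j=0$ is invariant with cofactor $P_j/x_j$, and $g_{n+2}=0$ is invariant with cofactor $K_{n+2}$. So the logarithmic derivative is
$$\frac{\chi H}{H}=y_{n+2}\Big(k_0+\sum_{l}k_lx_l^2\Big)+\sum_{j}y_j\Big(\alpha_j\big(1-\sum_l x_l^2\big)+\sum_l \Tilde{A}_{jl}x_l^2\Big).$$
Since the $x_l^2$ and $1$ are linearly independent monomials, $H$ is a first integral if and only if two sets of conditions hold. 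The constant term gives $k_0y_{n+2}+\sum_j\alpha_jy_j=0$. For each $l$, the coefficient of $x_l^2$ gives $k_ly_{n+2}-\sum_j\alpha_jy_j+\sum_j \Tilde{A}_{jl}y_j=0$. This is a homogeneous linear system of $n+2$ equations in $(y_1,\ldots,y_{n+2})$, and I expect its coefficient matrix to be (up to row operations, e.g.\ adding the constant equation to the others to get $(k_0+k_l)y_{n+2}+\sum_j\Tilde{A}_{jl}y_j=0$) the matrix $B$ of \eqref{eq:matrix-b}. Hence the set of admissible exponent vectors is exactly $\ker B\subset\RR^{n+2}$, of dimension $n+2-\operatorname{rank}(B)$.

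The second step is to show that $H_1,\ldots,H_n$ (with exponent vectors $y_1,\ldots,y_n$) are functionally independent if and only if $y_1,\ldots,y_n$ are linearly independent. The direction "functionally independent $\Rightarrow$ linearly independent" is immediate. The $H_i$ are independent as functions when
$$d\log H_i=\sum_j y_{ij}\frac{dx_j}{x_j}+y_{i(n+2)}\frac{dg_{n+2}}{g_{n+2}}.$$
Any linear relation among the $y_i$ gives a relation among the $d\log H_i$, hence $dH_1\wedge\cdots\wedge dH_n\equiv0$. For the converse, write $Y$ for the $n\times(n+2)$ matrix of exponents and $M(x)$ for the $(n+2)\times(n+1)$ matrix whose rows are the coefficient vectors of $dx_j/x_j$ and $dg_{n+2}/g_{n+2}$. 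Dependence of the differentials at every point means that for each $x$ there is $c(x)\neq0$ with $w:=c(x)Y\neq 0$ and $wM(x)=0$. Clearing denominators, $wM(x)=0$ reads $w_j g_{n+2}+w_{n+2}x_j\,\partial_j g_{n+2}=0$ for all $j$.

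This is where the hypothesis on the points $z_{ij}$ enters, and I expect it to be the main obstacle. I would argue that $\operatorname{rank}(YM(x))<n$ on an open set forces a fixed nonzero $w$ in the row space of $Y$ to lie in the left kernel of $M(x)$ at suitably many points. Here $w$ lies in the $n$-dimensional row space, while $\ker M(x)^{T}$ is one-dimensional generically. The idea is then to eliminate one coordinate index $i$, using $\partial_ig_{n+2}(z_{ij})\ne0$ to solve for $w_{n+2}$ in terms of $w_i$. Substituting into the remaining equations yields that a nonzero vector is orthogonal to the vectors
$$\big(x_1\partial_1g_{n+2},\ldots,\widehat{x_i\partial_ig_{n+2}},\ldots,x_{n+1}\partial_{n+1}g_{n+2},-g_{n+2}\big)$$
evaluated at $z_{i1},\ldots,z_{i(n+1)}$. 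Their assumed independence then forces $w=0$, a contradiction. Making the choice of $w$ uniform in the points (by passing to minors of $YM(x)$, which are rational functions that vanish identically once they vanish on an open set) is the delicate part.

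With these two steps the theorem follows. If the $H_i$ are functionally independent first integrals, their exponent vectors are $n$ linearly independent elements of $\ker B$, so $n+2-\operatorname{rank}(B)\ge n$, i.e.\ $\operatorname{rank}(B)\le2$. Conversely, if $\operatorname{rank}(B)\le2$, choose $n$ linearly independent vectors in $\ker B$. By the first step the corresponding $H_i$ are first integrals, and by the second step they are functionally independent.
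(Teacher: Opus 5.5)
Your reduction of ``$H$ is a first integral'' to $yB=0$ and your ``only if'' direction are correct and match the paper. However, the one nontrivial step is not proved, and the mechanism you sketch for it cannot work. That step is showing that linearly independent exponent vectors give functionally independent $H_i$. At a point $x$ with nonzero coordinates and $g_{n+2}(x)\neq 0$, the left kernel of $M(x)$ is the line spanned by $v(x)=(x_1\partial_1 g_{n+2},\ldots,x_{n+1}\partial_{n+1}g_{n+2},-g_{n+2})$. So $\operatorname{rank}J(x)<n$ means exactly $v(x)\in\mathrm{row}(Y)$, and your $w(x)$ is just a multiple of $v(x)$. Dependence therefore only says that $v(x)$ stays inside the $n$-dimensional space $\mathrm{row}(Y)$, not that its direction is constant. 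No fixed $w\in\mathrm{row}(Y)$ can be extracted, by minors or otherwise. For instance, with $g=x_1+x_2$ the dependent pair $x_1/g$, $x_2/g$ has $w(x)\propto(x_1,x_2,0,-x_1-x_2)$, which moves with $x$. Moreover, even a fixed $w$ would only give $w\parallel v(z_{ij})$ (your elimination of $w_{n+2}$ produces proportionalities $w_l\,x_i\partial_i g_{n+2}=w_i\,x_l\partial_l g_{n+2}$). It does not give a vector orthogonal to the hypothesis vectors, so the ``nonzero vector'' in that step is never identified.

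The missing idea is to pass to the orthogonal complement. We have $\mathrm{row}(Y)=(\ker Y)^{\perp}$, and $\ker Y\subset\RR^{n+2}$ is two-dimensional because $\operatorname{rank}Y=n$. Fix $i\le n+1$ and pick $0\neq q\in\ker Y$ with $q_i=0$; this is one linear condition on a plane, so such $q$ exists. If the $H_k$ were dependent, then $\operatorname{rank}J<n$ at every point, in particular at $z_{i1},\ldots,z_{i(n+1)}$. No open-set or identity-theorem argument is needed, since independence only asks for rank $n$ at one point. Hence $v(z_{ij})\cdot q=0$ for all $j$. Deleting the $i$-th entry, $q$ is orthogonal to the $n+1$ independent vectors of $\RR^{n+1}$ given by the hypothesis, so $q=0$, a contradiction. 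This is exactly the paper's argument. Its null vector $(p_2,\ldots,p_{n+2})$ of $(\mathbf y_2\ \cdots\ \mathbf y_{n+2})$ is such a $q$ with $q_1=0$, and it shows that $\tilde J_2,\ldots,\tilde J_{n+1}$ cannot be dependent at all of the points $z_{1j}$. In this formulation the paper's case distinctions ($\mathbf y_{n+2}=0$, $\mathbf y_2=0$) become unnecessary.
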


\section{Preliminaries}\label{sec:basic_on_vf}
In this section, we recall the definitions of first integral, complete integrability, Hamiltonian vector field, syzygies of some polynomials, and some basic notions required in this paper.

\begin{definition}\label{def_firs_intg}
Let $U$ be an open subset of $\RR^d$. A non-constant differentiable map $H \colon U \to \mathbb{R}$ is said to be a first integral
of the vector field \eqref{eq: I2} on $U$ if each solution curve of the system \eqref{eq: I1} contained in $U$ lies on one of the level surface of $H$; i.e., $H(x_1(t),\ldots,x_d(t)) =$ const for all values of $t$ for which the solution $(x_1(t),\ldots,x_d(t))$ is defined and contained in $U$.
\end{definition}
Note that $H$ is a first integral of the vector field \eqref{eq: I2} on $U$ if and only if $\chi H=0$ on $U$. When $H$ is a rational function, it is referred to as a rational first integral. The existence of a first integral reduces the analysis of the system by one dimension. Moreover, if the vector field \eqref{eq: I2} admits a first integral, then it possesses infinitely many invariant algebraic sets. A proof of this result can be found on page 102 of \cite{Jou79}.

A restricted version of the Darboux theory of integrability is the following.
\begin{proposition}
    Suppose that a polynomial vector field in $\RR^d$ has $p$ invariant algebraic hypersurfaces $f_i=0$ with cofactor $K_i$ for $i=1,\ldots,p$. If there exist $\lambda_1, \ldots, \lambda_p \in \RR$, not all zero, such that $\sum\limits_{i=1}^p \lambda_i K_i=0$, then $f_1^{\lambda_1}\ldots f_p^{\lambda_p}$ is a first integral of the vector field.
\end{proposition}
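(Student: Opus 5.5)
The plan is to compute $\chi H$ directly and use the logarithmic derivative. Write $H=f_1^{\lambda_1}\cdots f_p^{\lambda_p}$ on the open set $U$ where every $f_i\neq 0$, and if some $\lambda_i$ is not an integer, also restrict to where $f_i>0$ (or use $|f_i|$). On $U$, $H$ is a differentiable function. Since $\chi$ acts as a derivation, the Leibniz rule gives
\[
\chi H=\sum_{i=1}^p \lambda_i f_1^{\lambda_1}\cdots f_i^{\lambda_i-1}\cdots f_p^{\lambda_p}\,\chi f_i
= H\sum_{i=1}^p \lambda_i \frac{\chi f_i}{f_i}.
\]

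Each $f_i=0$ is invariant with cofactor $K_i$, so $\chi f_i=K_if_i$ and hence $\chi f_i/f_i=K_i$ on $U$. Substituting gives $\chi H=H\sum_{i}\lambda_iK_i$, which vanishes because $\sum_i\lambda_iK_i=0$. By the remark after Definition \ref{def_firs_intg}, $\chi H=0$ on $U$ means $H$ is constant along every solution curve contained in $U$.

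The remaining requirement in Definition \ref{def_firs_intg} is that $H$ be non-constant. This is the only delicate point, and I would handle it by assuming the hypersurfaces are given by distinct irreducible polynomials, which is implicit in the Darboux setting. Under that assumption, a nontrivial product $\prod f_i^{\lambda_i}$ with not all $\lambda_i$ zero cannot be constant. If it were, taking logarithmic derivatives would give $\sum_i\lambda_i\nabla f_i/f_i=0$. Multiplying by $\prod_j f_j$ and reducing modulo an $f_k$ with $\lambda_k\ne0$ forces $f_k$ to divide $\nabla f_k$, which is impossible by degree considerations for a non-constant $f_k$.

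The rest of the argument is a routine derivation identity.
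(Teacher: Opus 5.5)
The paper gives no proof of this proposition. It is recalled as the standard (restricted) Darboux criterion and deferred to the literature. Your computation $\chi H = H\sum_{i}\lambda_i K_i$ on the set where every $f_i\neq 0$ is exactly the standard argument behind it, so the core of your proposal is correct. Prefer the $|f_i|$ version for non-integer exponents, since $\{f_i>0\}$ may be empty.

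Your paragraph on non-constancy goes beyond what the paper says, and it is genuinely needed. As literally stated, the proposition is false: $f_1=f_2$ with $\lambda=(1,-1)$ satisfies the hypotheses but gives $H\equiv 1$, which Definition~\ref{def_firs_intg} excludes. Irreducibility is built into the paper's definition of an invariant algebraic set, but your distinctness hypothesis should read \emph{pairwise non-associated}, meaning no $f_j$ is a constant multiple of $f_k$. Otherwise $f_2=2f_1$ with $\lambda=(1,-1)$ still gives a constant.

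This refinement is exactly what your reduction modulo $f_k$ uses. The polynomial identity $\sum_i\lambda_i\,\frac{\partial f_i}{\partial x_l}\prod_{j\neq i}f_j=0$ holds on the nonempty open set $\{\prod_j f_j\neq 0\}$, hence identically. All terms with $i\neq k$ are divisible by $f_k$, so $f_k$ divides $\lambda_k\frac{\partial f_k}{\partial x_l}\prod_{j\neq k}f_j$. Because $f_k$ is prime in $\RR[x_1,\ldots,x_d]$ and divides no $f_j$ with $j\neq k$, you get $f_k\mid \frac{\partial f_k}{\partial x_l}$ for every $l$. By degree, all partials then vanish, so $f_k$ is constant, a contradiction.

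With that reading of ``distinct,'' your argument is complete.
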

 This type of first integral is known as a Darboux first integral. More general results using independent singular points and exponential factors can be seen in \cite{Jou79, llibre2000invariant, weil1995constantes}. A set of $m$ functions is called functionally independent in an open set $U$ if the rank of the derivative of the mapping $f\colon U\to \RR^m$ defined by these functions is $m$ at some point of $U$.

\begin{definition}\label{def_comp_intg}
The vector field \eqref{eq: I2} is said to be completely integrable in an open set $U$ if it admits $(d-1)$ functionally independent first integrals defined on $U$.
\end{definition}
If the vector field \eqref{eq: I2} is completely integrable with $(d-1)$ functionally independent first integrals $H_1,\ldots,H_{d-1}$ then the trajectories of the system are contained in $\bigcap\limits_{i=1}^{d-1}\{H_i=c_i\}$ for some $c_i\in \RR$. A discussion of the complete integrability of vector fields on $\RR^d$ can be found in \cite{LlRaRa20}.
Considering a permutation of coordinates, Definition 1 in \cite[Section 2.14]{Perko} can be rewritten as follows.
\begin{definition}
    A vector field $\chi=(P_1,\ldots, P_{2n})$ is called a Hamiltonian vector field if there exists a differentiable function $H \colon \RR^{2n}\to \RR$ such that $P_{2i-1}=-\frac{\partial H}{\partial x_{2i}}$ and $P_{2i}=\frac{\partial H}{\partial x_{2i-1}}$ for $1\leq i\leq n$.
\end{definition}

The function $H$ is called a Hamiltonian for the vector field. It follows that $H$ is a global first integral. So, the function $H$ is conserved along any trajectories of the vector field. We note that \cite{LlXi17} considers this definition to study Hamiltonian vector fields on $\RR^4_{+}$.

\begin{definition}
    Let $R$ be a ring and $M$ be an $R$-module. Suppose that $(g_1,\ldots,g_m)\in M^m$. A syzygy of $(g_1,\ldots,g_m)$ is a tuple $(f_1,\ldots,f_m)\in R^m$ such that $\sum\limits_{i=1}^m f_ig_i=0$.
\end{definition}
The set of all syzygies of $(g_1,\ldots,g_m)$ forms an $R$-module which is called the
(first) syzygy module of $(g_1,\ldots,g_m)$ and is denoted by ${\rm Syz}_R(g_1,\ldots,g_m)$. For more information on syzygies, one can see \cite{MR2723052}. In this paper, we consider $R=M=\RR[x_1,\ldots,x_d]$, the polynomial ring in $d$ variables over real.

The following result is algebraic. However, it is crucial to classify polynomial Kolmogorov vector fields on $\SN^n$.  
\begin{lemma}[\cite{jana2024dynamics}]\label{lem:sum-pi-xi-0}
Let $(Q_1,\ldots,Q_d)$ be a syzygy of $(x_1^k,\ldots,x_d^k)$ in $\RR[x_1,\ldots,x_d]$ for some $k\geq 1$.  Then $Q_i=\sum\limits_{j=1}^d  \Tilde{A}_{ij}x_j^k$ for some $\Tilde{A}_{ij}\in \RR[x_1,\dots,x_d]$ such that $A=(\Tilde{A}_{ij})_{d\times d}$ is a skew-symmetric matrix.
\end{lemma}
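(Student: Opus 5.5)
The last statement is Lemma \ref{lem:sum-pi-xi-0}: if $\sum_{i=1}^d Q_i x_i^k=0$, then $Q_i=\sum_j \Tilde{A}_{ij}x_j^k$ with $(\Tilde{A}_{ij})$ a skew-symmetric polynomial matrix. I would prove it by showing that the syzygy module of $(x_1^k,\ldots,x_d^k)$ is generated by the Koszul syzygies
\[
e_{ij}:=x_j^k\mathbf{e}_i-x_i^k\mathbf{e}_j,\qquad i<j .
\]
This suffices. If $(Q_1,\ldots,Q_d)=\sum_{i<j}c_{ij}e_{ij}$ with $c_{ij}\in\RR[x_1,\ldots,x_d]$, set $\Tilde{A}_{ij}=c_{ij}$ and $\Tilde{A}_{ji}=-c_{ij}$ for $i<j$, and $\Tilde{A}_{ii}=0$. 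Then $\Tilde{A}$ is skew-symmetric, and reading off the $i$-th coordinate gives $Q_i=\sum_j \Tilde{A}_{ij}x_j^k$.

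The generation statement I would prove by induction on $d$; the case $d=1$ is trivial, since $Q_1x_1^k=0$ forces $Q_1=0$.

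For the inductive step, suppose $\sum_{i=1}^d Q_ix_i^k=0$. Then $Q_d x_d^k\in(x_1^k,\ldots,x_{d-1}^k)$. This ideal is a monomial ideal, and $x_d$ does not occur in its generators, so $x_d^k$ is a nonzerodivisor modulo it. Concretely, a monomial lies in the ideal iff it is divisible by some $x_i^k$ with $i<d$, and multiplying by $x_d^k$ does not change that. Hence $Q_d\in(x_1^k,\ldots,x_{d-1}^k)$, say $Q_d=\sum_{i<d} c_i x_i^k$. Subtracting $\sum_{i<d} c_i e_{id}$ from $(Q_1,\ldots,Q_d)$ produces a syzygy whose last coordinate is $Q_d-\sum_{i<d}c_ix_i^k=0$. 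Its first $d-1$ coordinates then form a syzygy of $(x_1^k,\ldots,x_{d-1}^k)$, and the induction hypothesis expresses it through the $e_{ij}$ with $i<j\le d-1$.

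The only step needing care is the nonzerodivisor claim. I would verify it monomial-by-monomial: write $Q_d$ in the monomial basis, drop the terms already in the ideal, and observe that if the remainder $R$ is nonzero, then $Rx_d^k$ has a monomial not divisible by any $x_i^k$ with $i<d$. The rest is bookkeeping. Alternatively, one can cite the standard fact that $x_1^k,\ldots,x_d^k$ is a regular sequence, so the Koszul complex is exact and the first syzygies are generated by the Koszul relations.
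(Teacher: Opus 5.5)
The paper contains no proof of this lemma. It quotes it from \cite{jana2024dynamics} and uses it as a black box, so your argument supplies what the paper omits rather than matching an internal route.

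Your proof is correct. You identify the syzygy module with the module generated by the Koszul relations $e_{ij}$, which amounts to using that $x_1^k,\ldots,x_d^k$ is a regular sequence. Your monomial argument that $x_d^k$ is a nonzerodivisor modulo $(x_1^k,\ldots,x_{d-1}^k)$ is exactly the input needed; it relies on the fact that a polynomial lies in a monomial ideal iff each of its monomials does.

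One sign needs fixing. The $d$-th coordinate of $e_{id}=x_d^k\mathbf{e}_i-x_i^k\mathbf{e}_d$ is $-x_i^k$, so subtracting $\sum_{i<d}c_ie_{id}$ leaves last coordinate $2Q_d$, not $0$. You should add $\sum_{i<d}c_ie_{id}$ instead. Then $\Tilde{A}_{id}=-c_i$, $\Tilde{A}_{di}=c_i$, and the rest goes through unchanged.

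Your explicit inductive construction also gives control of degrees. The lemma as stated omits this, but the paper relies on it. If you choose the $c_i$ by distributing the monomials of $Q_d$ among the $x_i^k$ dividing them, then:
\begin{itemize}
\item $\deg c_i\le \deg Q_d-k$;
\item the modified coordinates $Q_i+c_ix_d^k$ have degree at most $D:=\max_l\deg Q_l$;
\item by induction, $\deg\Tilde{A}_{ij}\le D-k$ for all $i,j$.
\end{itemize}
If all $Q_l$ are homogeneous of degree $D$, every $\Tilde{A}_{ij}$ is zero or homogeneous of degree $D-k$.

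These are exactly the bounds invoked in the proof of \Cref{thm:kolm-vfld} ($\deg B_{ij}\le m-3$ and $\deg\Tilde{A}_{ij}\le m-3$) and in \Cref{prop:hom-classify} (homogeneous of degree $m-3$). It is worth recording them as part of the lemma, which a bare citation of the Koszul complex would not make explicit.
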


The intersection of a hyperplane $L:=\Big\{\sum\limits_{i=1}^{n+1}a_ix_i+b=0\Big\}$ with $\SN^n$ is said to be an $(n-1)$-sphere in $\SN^n$ where $a_i,b\in \RR$ with $a_1^2 + \cdots + a_{n+1}^2=1$ and $|b|<1$. If the hyperplane $L$ passes through the origin (i.e., $b=0$ in $L$), the intersection is called a great $(n-1)$-sphere in $\SN^n$.


\section{Polynomial Kolmogorov vector fields on $\SN^n$}\label{sec:vf_on_sn}
In this section, we characterize any polynomial Kolmogorov vector fields on the standard $n$-sphere in $\RR^{n+1}$. Then, we discuss their first integrals and complete integrability. We show that there is no cubic Hamiltonian Kolmogorov vector field on odd-dimensional spheres. We study invariant spheres of cubic and quartic homogeneous Kolmogorov vector fields on $\SN^n$. We note that the paper \cite{zerz2012controlled} gave a necessary and sufficient condition when a hypersurface is invariant of a polynomial vector field. However, the paper \cite{jana2024dynamics} presented a more explicit form of a polynomial vector field with an invariant sphere. We use the latter result for our convenience.

\begin{proof}[\textbf{Proof of  \Cref{thm:kolm-vfld}}]
  Suppose that $\chi=(P_1,\ldots,P_{n+1})$ is a degree $m$ Kolmogorov vector field on $\mathbb{S}^n$. By \cite[Remark 3.2]{jana2024dynamics}, we have
 \begin{equation}\label{eq:kolm}
   P_i=\Big(1-\sum\limits_{k=1}^{n+1} x_k^2\Big) f_{i} +\sum\limits_{j=1}^{n+1} A_{ij}x_j,
    \end{equation}
    for some polynomial $f_i,A_{ij} \in \RR[x_1, \ldots, x_{n+1}]$ with $\deg f_i\leq m-2$ and $\deg A_{ij}\leq m-1$ such that $(A_{ij})_{(n+1)\times (n+1)}$ is a skew-symmetric matrix. Moreover, $x_i$ divides $P_i$, for $i=1,\ldots,n+1$. Therefore, $P_i=\Tilde{P}_ix_i$ for some $\Tilde{P}_i\in \RR[x_1,\ldots,x_{n+1}]$, for $i=1,\ldots,n+1$. Observe that $$\sum\limits_{i=1}^{n+1}\Tilde{P}_ix_i^2=\Big(1-\sum\limits_{k=1}^{n+1} x_k^2\Big) \Big(\sum\limits_{i=1}^{n+1}f_i x_i\Big).$$
This simplifies to $\sum\limits_{i=1}^{n+1}\Tilde{P}_ix_i^2 + \Big(\sum\limits_{i=1}^{n+1}f_i x_i\Big) \sum\limits_{k=1}^{n+1}  x_k^2 = \sum\limits_{i=1}^{n+1}f_i x_i.$
Hence, $\sum\limits_{i=1}^{n+1}\Tilde{f}_i x_i^2 =\sum\limits_{i=1}^{n+1}f_i x_i$ with $\deg \Tilde{f}_i\leq m-3$, leading to $\sum\limits_{i=1}^{n+1}(f_i-\Tilde{f}_ix_i)x_i=0$. By \Cref{lem:sum-pi-xi-0}, we have $f_i=\Tilde{f}_ix_i+\sum\limits_{j=1}^{n+1} B_{ij}x_j$ for some $B_{ij}\in \RR[x_1,\ldots,x_{n+1}]$ with $\deg B_{ij}\leq m-3$ such that $(B_{ij})_{(n+1)\times (n+1)}$ is a skew-symmetric matrix. Substituting $f_i$ in the original expression \eqref{eq:kolm} of $P_i$, we obtain $$P_i=\Big(1-\sum\limits_{k=1}^{n+1} x_k^2\Big)\Tilde{f}_ix_i+\sum\limits_{j=1}^{n+1}C_{ij}x_j,$$
where $C_{ij}:=\Big(1-\sum\limits_{k=1}^{n+1} x_k^2\Big)B_{ij}+A_{ij}$. Note that $\deg C_{ij}\leq m-1$. Since $P_i$ is divisible by $x_i$, it follows that $\sum\limits_{j=1}^{n+1}C_{ij}x_j=L_ix_i$ for some $L_i\in \RR[x_1,\ldots,x_{n+1}]$. Thus,
$$0=\sum\limits_{i=1}^{n+1}x_i\sum\limits_{j=1}^{n+1}C_{ij}x_j=\sum\limits_{i=1}^{n+1}L_ix_i^2.$$
Hence, by \Cref{lem:sum-pi-xi-0}, $L_i=\sum\limits_{j=1}^{n+1}\Tilde{A}_{ij}x_j^2$ where $\Tilde{A}_{ij}\in \RR[x_1,\ldots,x_{n+1}]$ with $\deg \Tilde{A}_{ij}\leq m-3$ such that $\Tilde{A}:=(\Tilde{A}_{ij})_{(n+1)\times (n+1)}$ is a skew-symmetric matrix. Therefore,
$$P_i=x_i\Big(\Big(1-\sum\limits_{k=1}^{n+1} x_k^2\Big)\Tilde{f}_i+\sum\limits_{j=1}^{n+1}\Tilde{A}_{ij}x_j^2\Big).$$

One can verify that the converse part is also true with the cofactor $-2\sum\limits_{i=1}^{n+1}\Tilde{f}_ix_i^2$.

\end{proof}

\begin{remark}
A polynomial Kolmogorov vector field on $\SN^n$ has degree at least three.
\end{remark}
\begin{theorem}\label{thm_cubic_firint}
    Suppose that $\chi=(P_1,\ldots,P_{n+1})$ is a cubic Kolmogorov vector field on $\SN^n$ where $P_i$ are given by \eqref{eq:deg-m-form}. If $y=(y_1,\ldots,y_{n+1})\in \RR^{n+1}$ is a non-zero syzygy of $(\Tilde{f}_1,\ldots,\Tilde{f}_{n+1})$ and $\Tilde{A}y=0$ then $H=\prod\limits_{i=1}^{n+1}x_i^{y_i}$ is a first integral of $\chi$.
\end{theorem}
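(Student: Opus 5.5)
The plan is to check directly that $\chi H=0$, using the Darboux structure of the coordinate hyperplanes. Since $\chi$ is cubic, \Cref{thm:kolm-vfld} gives $\deg \Tilde f_i\le 0$ and $\deg\Tilde A_{ij}\le 0$. So the $\Tilde f_i$ are real constants and $\Tilde A$ is a constant skew-symmetric matrix. Hence "$y$ is a syzygy of $(\Tilde f_1,\ldots,\Tilde f_{n+1})$" means $\sum_i y_i\Tilde f_i=0$ with real $y_i$, and $\Tilde A y=0$ is an ordinary linear condition.

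Each hyperplane $x_i=0$ is invariant, because $\chi x_i=P_i=x_iK_i$ with cofactor
$$K_i=\Big(1-\sum_{k=1}^{n+1}x_k^2\Big)\Tilde f_i+\sum_{j=1}^{n+1}\Tilde A_{ij}x_j^2 .$$
By the Darboux proposition in Section \ref{sec:basic_on_vf}, it suffices to show $\sum_i y_iK_i=0$. Then $H=\prod x_i^{y_i}$ satisfies $\chi H=H\sum_i y_iK_i=0$ on the open set where all $x_i\neq0$ (using $|x_i|^{y_i}$ if needed).

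Expanding gives
$$\sum_i y_iK_i=\Big(1-\sum_k x_k^2\Big)\sum_i y_i\Tilde f_i+\sum_j x_j^2\sum_i y_i\Tilde A_{ij}.$$
The first term vanishes by the syzygy condition. For the second, skew-symmetry gives $\sum_i y_i\Tilde A_{ij}=-(\Tilde Ay)_j=0$. So the total cofactor is identically zero.

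We also need $H$ to be non-constant, and this holds because $y\neq0$. Nothing here is a real obstacle. The only care needed is the degree bookkeeping that makes $\Tilde f_i$ and $\Tilde A_{ij}$ constants, so that the syzygy and kernel conditions are exactly the scalar identities used above.
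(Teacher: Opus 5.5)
Your proposal is correct and follows essentially the same route as the paper: treat $\Tilde f_i$ and $\Tilde A$ as constants, use the invariant hyperplanes $x_i=0$ with cofactors $K_i$, show $\sum_i y_iK_i=0$ from the syzygy condition together with $y^t\Tilde A=-(\Tilde Ay)^t=0$, and then invoke the Darboux proposition. Your write-up is slightly more explicit than the paper's. The paper attributes the vanishing of $y^t\Tilde A\,(x_1^2,\ldots,x_{n+1}^2)^t$ to skew-symmetry alone, whereas that step also uses the hypothesis $\Tilde Ay=0$.
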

\begin{proof}
    Since $\chi$ is a cubic vector field, $\Tilde{f}_i$ can be chosen as a constant for $i=1,\ldots,n+1$, and $\Tilde{A}$ can be chosen as a constant skew-symmetric matrix. Observe that $x_i=0$ is invariant with cofactor $K_i=\Big(1-\sum\limits_{k=1}^{n+1} x_k^2\Big)  \Tilde{f}_{i} +\sum\limits_{j=1}^{n+1} \Tilde{A}_{ij}x_j^2$. We compute the following.
    \begin{equation*}
            \sum_{i=1}^{n+1}y_iK_i=\Big(1-\sum\limits_{k=1}^{n+1} x_k^2\Big)\Big(\sum_{i=1}^{n+1}y_i\Tilde{f}_i\Big)+\sum_{i=1}^{n+1}y_i\sum_{j=1}^{n+1}\Tilde{A}_{ij}x_j^2=y^t \Tilde{A}\begin{pmatrix}
                x_1^2&\cdots&x_{n+1}^2
            \end{pmatrix}^t=0
    \end{equation*}
    since $\Tilde{A}$ is skew-symmetric. Hence, by Darboux Integrability Theory \cite{Dar78, llibre2018darboux}, $H=\prod\limits_{i=1}^{n+1}x_i^{y_i}$ is a first integral of $\chi$.
\end{proof}

\begin{proposition}\label{thm:linear-fi}
 Let  $f\in \RR[x_1,\ldots,x_{n+1}]$ be a linear polynomial. Then, $f$ is a first integral of some Kolmogorov vector fields on $\SN^n$.
\end{proposition}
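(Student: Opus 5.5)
The plan is to use the converse direction of \Cref{thm:kolm-vfld}. Any choice of polynomials $\Tilde{f}_i$ and a skew-symmetric polynomial matrix $\Tilde{A}$ yields, via \eqref{eq:deg-m-form}, a Kolmogorov vector field $\chi$ on $\SN^n$. So it suffices to choose these data so that $\chi f=0$ identically in $\RR^{n+1}$ and $\chi\neq 0$.

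Write $f=\sum_{i=1}^{n+1}a_ix_i+b$. Since $\chi f=\sum_i a_iP_i$, the constant $b$ plays no role. From \eqref{eq:deg-m-form},
$$\chi f=\Big(1-\sum_{k=1}^{n+1}x_k^2\Big)\sum_{i=1}^{n+1}a_ix_i\Tilde{f}_i+\sum_{i,j=1}^{n+1}a_ix_i\Tilde{A}_{ij}x_j^2 .$$
I will take $\Tilde{A}=0$, which is trivially skew-symmetric. This leaves only the requirement $\sum_i a_ix_i\Tilde{f}_i=0$, i.e. $(\Tilde{f}_1,\ldots,\Tilde{f}_{n+1})$ must be a syzygy of $(a_1x_1,\ldots,a_{n+1}x_{n+1})$.

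Assume $f$ is nonconstant, as a first integral must be, so some $a_k\neq 0$. Pick any index $l\neq k$; this is possible since $n\geq 1$. Use the Koszul-type syzygy
$$\Tilde{f}_k=a_lx_l,\qquad \Tilde{f}_l=-a_kx_k,\qquad \Tilde{f}_i=0\ \text{ for } i\neq k,l.$$
Then $a_kx_k\Tilde{f}_k+a_lx_l\Tilde{f}_l=a_ka_lx_kx_l-a_la_kx_lx_k=0$, so $\chi f\equiv 0$. The resulting field has
$$P_l=-a_kx_kx_l\Big(1-\sum_{j=1}^{n+1} x_j^2\Big),$$
which is nonzero because $a_k\neq0$, so $\chi$ is a genuine quartic Kolmogorov vector field on $\SN^n$. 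Multiplying all $\Tilde{f}_i$ by an arbitrary polynomial $g$ gives fields of any higher degree with the same property, which justifies the plural in the statement.

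The only point needing care is avoiding the trivial choice $\chi=0$, and the choice of $k$ with $a_k\neq0$ handles it. One could instead try to use $\Tilde{A}$, for example via $\Tilde{A}_{12}=D_{13}$ and $\Tilde{A}_{13}=-D_{12}$ with $D_{ij}=x_ix_j(a_ix_j-a_jx_i)$. However, that needs at least three variables, so the $\Tilde{f}$-route is the one I would use.
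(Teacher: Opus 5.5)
Your proof is correct under the paper's conventions, but it works in the opposite slot of \eqref{eq:deg-m-form} from the paper. Those conventions are that a vector field on $\SN^n$ is a polynomial field on $\RR^{n+1}$ leaving $\SN^n$ invariant, and that a first integral only needs $\chi f=0$ on an open set of $\RR^{n+1}$; the paper itself relies on this when it uses $\sum_i x_i^2-1$ as a first integral. The paper takes $\Tilde f_i=0$, so its fields are $P_i=x_i\sum_j\Tilde A_{ij}x_j^2$. It then builds a nonzero polynomial skew-symmetric $\Tilde A$ with $(a_1x_1,\ldots,a_{n+1}x_{n+1})\Tilde A=0$: it places $x_kB$, for a nonzero $n\times n$ skew-symmetric $B$, outside row and column $k$, and solves for row and column $k$ (the factor $x_k$ makes the division by $a_kx_k$ polynomial). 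Your choice of $\Tilde A=0$ with a Koszul syzygy in the $\Tilde f$-slot is shorter and works for every $n\ge 1$. The obstruction you noticed for the $\Tilde A$-route in fact breaks the paper's argument at $n=1$. There no nonzero $1\times 1$ skew-symmetric $B$ exists, and every field $P_i=x_i\sum_j\Tilde A_{ij}x_j^2$ on $\SN^1$ gives $\chi f=\Tilde A_{12}x_1x_2(a_1x_2-a_2x_1)$, which vanishes only if $\Tilde A_{12}=0$. What the paper's route buys is non-degeneracy on the sphere. Every component of your field carries the factor $1-\sum_k x_k^2$, so your $\chi$ vanishes identically on $\SN^n$ and every point of the sphere is an equilibrium. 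The first-integral property then says nothing about the dynamics on $\SN^n$, only about the flow off it. The paper's fields (e.g.\ with $B$ constant) are nonzero and homogeneous, hence not identically zero on $\SN^n$, so there $f$ really confines orbits in the sphere to the slices $\{f=c\}\cap\SN^n$; for $n\ge 2$ your $D_{ij}$ variant would give this too. For $n=1$ the degeneracy is forced. At $p\in\SN^1$ the vector $\chi(p)$ is orthogonal to both $p$ and $(a_1,a_2)$, so it vanishes except possibly at two points, hence on all of $\SN^1$ by continuity. Your construction is therefore essentially the only way the statement can hold there.
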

\begin{proof}
   Assume that $f:=a_0+\sum\limits_{i=1}^{n+1}a_ix_i$ where $a_j\in \RR$, $0\leq j\leq n+1$. We construct an $(n+1)\times (n+1)$ non-zero skew-symmetric matrix $\Tilde{A}=(\Tilde{A}_{ij})$ such that $a(x)^t\Tilde{A}=0$ where $\Tilde{A}_{ij}\in \RR[x_1,\ldots,x_{n+1}]$ and  $a(x)^t=\begin{pmatrix}
       a_1x_1&\cdots&a_{n+1}x_{n+1}
   \end{pmatrix}$. Note that, by our construction, $(P_1,\ldots,P_{n+1})$ with $P_i=x_i\sum\limits_{j=1}^{n+1}\Tilde{A}_{ij}x_j^2$ is a non-zero vector field on $\SN^n$ such that $f$ is a first integral of it.

Suppose that $a_k\neq 0$ for some $k\in \{1,\ldots,n+1\}$. We take an arbitrary $n\times n$ non-zero skew-symmetric matrix $B:=(B_{ij})$ where $B_{ij}\in \RR[x_1,\ldots,x_{n+1}]$. Now, we define a new skew-symmetric matrix $(A_{ij})=A:=x_kB$. We construct the matrix $\Tilde{A}=(\Tilde{A}_{ij})$ as follows.
\begin{equation*}
\Tilde{A}_{ij}=
    \begin{cases}
        &A_{ij}~\mbox{if } i,j<k,\\
        &A_{i(j-1)}~\mbox{if } i<k,j>k,\\
        &A_{(i-1)j}~\mbox{if } i>k,j<k,\\
        &A_{(i-1)(j-1)}~\mbox{if } i,j>k,\\
    \end{cases}
\end{equation*}
$\Tilde{A}_{kj}=-\frac{1}{a_kx_k}\Big(\sum\limits_{i=1,i\neq k}^{n+1} a_ix_i\Tilde{A}_{ij}\Big)$, and $\Tilde{A}_{jk}=-\Tilde{A}_{kj}$ for $j\neq k$. Moreover, $\Tilde{A}_{kk}=0$. Observe that $\Tilde{A}$ is a non-zero skew-symmetric matrix. For $j\neq k$, we have the following.
\begin{align*}
        \sum\limits_{i=1}^{n+1} a_ix_i\Tilde{A}_{ij}&=\sum\limits_{i=1,i\neq k}^{n+1} a_ix_i\Tilde{A}_{ij}+a_kx_k\Tilde{A}_{kj}\\
        &=\sum\limits_{i=1,i\neq k}^{n+1} a_ix_i\Tilde{A}_{ij}-\sum\limits_{i=1,i\neq k}^{n+1} a_ix_i\Tilde{A}_{ij}\\
        &=0.
\end{align*}
Furthermore, 
\begin{align*}
        \sum\limits_{i=1}^{n+1} a_ix_i\Tilde{A}_{ik}&=\sum\limits_{i=1,i\neq k}^{n+1} a_ix_i\Tilde{A}_{ik}\\
        &=\frac{1}{a_kx_k}\sum\limits_{i=1,i\neq k}^{n+1} a_ix_i \Big(\sum\limits_{j=1,j\neq k}^{n+1} a_jx_j\Tilde{A}_{ji}\Big)\\
        &=\frac{1}{a_kx_k}\Big(\sum\limits_{i=1}^{n+1} a_ix_i \Big(\sum\limits_{j=1}^{n+1} a_jx_j\Tilde{A}_{ji}\Big)-a_kx_k\sum\limits_{j=1,j\neq k}^{n+1}a_jx_j\Tilde{A}_{jk}-a_kx_k\sum\limits_{i=1,i\neq k}^{n+1}a_ix_i\Tilde{A}_{ki}\Big)\\
        &=-\frac{1}{a_kx_k}a(x)^t\Tilde{A}a(x)-\sum\limits_{i=1}^{n+1}a_ix_i(\Tilde{A}_{ik}+\Tilde{A}_{ki})\\
        &=0.
\end{align*}
Therefore, $a(x)$ is orthogonal to the columns of $\Tilde{A}$. Thus, the proof is complete.
\end{proof}

\begin{theorem}\label{thm:cmplt-int-vfld}
There exist completely integrable Kolmogorov vector fields of degree $m$ on $\SN^n$ for each $m\geq 3$ and $n\in \NN$.
\end{theorem}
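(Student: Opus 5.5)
Since the vector fields live in $\RR^{n+1}$, complete integrability means exhibiting $n$ functionally independent first integrals. The plan is to build Darboux first integrals from the invariant hyperplanes $x_i=0$, following \Cref{thm_cubic_firint}, and to add one extra integral coming from the sphere.

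\textbf{Construction.} I would take $\Tilde{A}=0$ in \eqref{eq:deg-m-form}. I would also let all $\Tilde{f}_i$ equal a common polynomial $g$ of degree exactly $m-3$, for instance $g=1+x_1^{m-3}$, or $g=1$ when $m=3$. This gives
$$P_i=x_i\Big(1-\sum_{k=1}^{n+1}x_k^2\Big)g, \qquad 1\le i\le n+1.$$
By \Cref{thm:kolm-vfld} this is a Kolmogorov vector field on $\SN^n$, and it has degree $m$ because the top-degree part $-x_i\big(\sum_k x_k^2\big)\,\mathrm{lt}(g)$ is nonzero. Each hyperplane $x_i=0$ is invariant with cofactor $K_i=(1-\sum_k x_k^2)g$, and all these cofactors coincide.

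\textbf{First integrals.} By the Darboux proposition, any $y\in\RR^{n+1}$ with $\sum_i y_i=0$ gives a first integral $\prod_i x_i^{y_i}$, since then $\sum_i y_iK_i=0$. Directly, the choice $y=e_i-e_{n+1}$ gives
$$H_i=\frac{x_i}{x_{n+1}}, \qquad i=1,\ldots,n.$$
One checks $\chi H_i=(x_{n+1}P_i-x_iP_{n+1})/x_{n+1}^2=0$. These are rational first integrals on the open set $U=\{x_{n+1}\neq 0\}$.

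\textbf{Independence.} The Jacobian of $(H_1,\ldots,H_n)$ with respect to $(x_1,\ldots,x_n)$ equals $x_{n+1}^{-1}I_n$, so it has rank $n$ on all of $U$. That gives $n=d-1$ functionally independent first integrals on $U$. If one prefers integrals defined on $\SN^n$, the same count works using $x_i/x_{n+1}$ restricted to the sphere. One could also add $\sum_k x_k^2$ as an $(n+1)$-th integral, which is legitimate because the field is radial.

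\textbf{Main obstacle.} The real difficulty is conceptual. One could worry that a radial field is too degenerate, or that the definition demands first integrals that are polynomial or global. If a non-radial example is wanted, I would instead take $\Tilde{A}$ nonzero but supported on a single pair, say $\Tilde{A}_{12}=-\Tilde{A}_{21}=h$. I would then seek $n$ independent vectors $y$ with $\sum_i y_i=0$ and $\Tilde{A}y=0$ in the cubic case. This fails, because $\Tilde{A}y=0$ forces $y_1=y_2=0$, which leaves only $n-1$ such vectors. I would compensate with the integral $x_1^2+x_2^2$ combined with the sphere. Checking the independence of this combination is the step where the computation would need care, so I would keep the simple radial construction above as the primary proof.
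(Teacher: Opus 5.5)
Your construction is correct and meets the paper's definitions, but it uses the opposite half of the normal form \eqref{eq:deg-m-form} from the one the paper uses.

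You set $\Tilde{A}=0$ and take all $\Tilde f_i=g$. Then $\chi=(1-\sum_k x_k^2)\,g\,(x_1,\ldots,x_{n+1})$ is a multiple of the Euler field. Your $n$ integrals are the Darboux quotients $x_i/x_{n+1}$, which are rational and defined only on $\{x_{n+1}\neq 0\}$. That is allowed, since complete integrability is defined on an open set $U$, and the paper does the same in \Cref{cor:ind-fi}.

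The paper instead sets $\Tilde f_i=0$ and keeps a single skew entry: $P_1=\Tilde{A}x_1x_2^2$, $P_2=-\Tilde{A}x_1^2x_2$, $P_3=\cdots=P_{n+1}=0$. Since $\sum_i x_iP_i=0$, this field is tangent to every sphere centred at the origin. So $\sum_i x_i^2-1$ and $x_3,\ldots,x_{n+1}$ are \emph{polynomial}, globally defined first integrals, independent wherever $(x_1,x_2)\neq 0$.

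The paper's route gives a field that is genuinely nontrivial on $\SN^n$. Yours, being radial, vanishes identically on $\SN^n$, because the factor $1-\sum_k x_k^2$ kills it there. The sphere is then a continuum of equilibria, your example meets the statement only in the letter, and your remark about restricting $x_i/x_{n+1}$ to the sphere is vacuous. What your route gives in return is a uniform Darboux-type argument in the spirit of \Cref{thm_cubic_firint}, with an independence check (Jacobian $x_{n+1}^{-1}I_n$) valid on a whole open set.

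One side remark in your proposal is false, though you never use it. $\sum_k x_k^2$ is not a first integral of your field, since $\chi\big(\sum_k x_k^2\big)=2\big(1-\sum_k x_k^2\big)g\sum_k x_k^2\not\equiv 0$. Radial flows preserve ratios of coordinates, not the norm. Moreover, $n+1$ functionally independent first integrals in $\RR^{n+1}$ would force $\chi\equiv 0$. Also, in your non-radial aside, $y_1=y_2=0$ together with $\sum_i y_i=0$ leaves an $(n-2)$-dimensional space, not an $(n-1)$-dimensional one.
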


\begin{proof}
If $\Tilde{A} \in \RR[x_1, \ldots, x_{n+1}]$ is a degree $m-3$ polynomial and $P_1=\Tilde{A}x_1x_2^2, P_2=-\Tilde{A}x_1^2x_2, P_3 = \cdots =P_{n+1}=0$, then $\chi_n:=(P_1, \ldots, P_{n+1})$ is a Kolmogorov vector field on $\SN^n$ of degree $m$. Note that $f_1:=\sum\limits_{i=1}^{n+1}x_i^2-1$ is a first integral of $\chi_n$. Also, $f_j:=x_j$ is a first integral of $\chi_n$ for $j=3,\ldots,n+1$. Hence, $\chi_n$ has $n$ functionally independent first integrals, which makes it completely integrable.
\end{proof}

\begin{proof}[\textbf{Proof of  \Cref{thm_no_Hamilton}}]
Suppose that $(P_1,\ldots,P_{2n})$ is a cubic Kolmogorov vector field on $\SN^{2n-1}$. By \Cref{thm:kolm-vfld}, there exists $\alpha_1,\ldots,\alpha_{2n}\in \RR$ and a constant skew-symmetric matrix $\Tilde{A}=(\Tilde{A}_{ij})$ such that
\begin{equation}\label{eq:ham-form}
    P_i=x_i\Big(\alpha_i \Big(1-\sum\limits_{k=1}^{2n} x_k^2\Big)  +\sum\limits_{j=1}^{2n} \Tilde{A}_{ij}x_j^2\Big).
\end{equation}
We assume that the vector field is Hamiltonian. So, $\frac{\partial P_{2i-1}}{\partial x_{2i-1}}+\frac{\partial P_{2i}}{\partial x_{2i}}=0$ for $1\leq i\leq n$. We compute 
\begin{equation*}
    \frac{\partial P_{2i-1}}{\partial x_{2i-1}}+\frac{\partial P_{2i}}{\partial x_{2i}}=(\alpha_{2i-1}+\alpha_{2i})\Big(1-\sum\limits_{k=1}^{2n} x_k^2\Big)-2(\alpha_{2i-1}x_{2i-1}^2+\alpha_{2i}x_{2i}^2)+\sum\limits_{j=1}^{2n} (\Tilde{A}_{(2i-1)j}+\Tilde{A}_{(2i)j})x_j^2.
\end{equation*}
Therefore, we have the following observations:
\begin{enumerate}[(i)]
    \item $\alpha_{2i-1}+\alpha_{2i}=0$,
    \item $\Tilde{A}_{(2i)(2i-1)}=2\alpha_{2i-1}$,
    \item $\Tilde{A}_{(2i-1)(2i)}=2\alpha_{2i}$,
    \item $\Tilde{A}_{(2i-1)j}+\Tilde{A}_{(2i)j}=0$, for $j\in \{1,\ldots,2i-2,2i+1,\ldots,2n\}$.
\end{enumerate}
We fix $i\in \{1,\ldots,n\}.$ If $H$ is a Hamiltonian for the vector field then $P_{2i-1}=-\frac{\partial H}{\partial x_{2i}}$. Integrating both sides, we obtain
\begin{dmath*}
    H=\alpha_{2i-1}x_{2i-1}\Big(x_{2i}\Big(\sum\limits_{k=1,k\neq 2i}^{2n} x_k^2-1\Big)+\frac{x_{2i}^3}{3}\Big)-x_{2i-1}\Big(\sum\limits_{j=1,j\neq 2i}^{2n} \Tilde{A}_{(2i-1)j}x_j^2 x_{2i}+\Tilde{A}_{(2i-1)(2i)}\frac{x_{2i}^3}{3}\Big)+\phi,
\end{dmath*}
where $\phi$ is a function of $x_1,\ldots,x_{2i-1},x_{2i+1},\ldots,x_{2n}$.
Using the observations $(i)$ and $(iii)$, we simplify $H$ as follows.
\begin{dmath}\label{eq:h-form1}
    H=\alpha_{2i-1}x_{2i-1}x_{2i}\Big(\sum\limits_{k=1}^{2n} x_k^2-1\Big)-x_{2i-1}x_{2i}\Big(\sum\limits_{j=1,j\neq 2i}^{2n} \Tilde{A}_{(2i-1)j}x_j^2 \Big)+\phi.
\end{dmath}
Hence,
$$P_{2i}=\frac{\partial H}{\partial x_{2i-1}}=\alpha_{2i-1}x_{2i}\Big(\sum\limits_{k=1}^{2n} x_k^2-1\Big)+2\alpha_{2i-1}x_{2i-1}^2x_{2i}-x_{2i}\Big(\sum\limits_{j=1,j\neq 2i}^{2n} \Tilde{A}_{(2i-1)j}x_j^2 \Big)+\frac{\partial \phi}{\partial x_{2i-1}}.$$
From \eqref{eq:ham-form}, we obtain
\begin{dmath*}
\alpha_{2i-1}x_{2i}\Big(\sum\limits_{k=1}^{2n} x_k^2-1\Big)+2\alpha_{2i-1}x_{2i-1}^2x_{2i}-x_{2i}\Big(\sum\limits_{j=1,j\neq 2i}^{2n} \Tilde{A}_{(2i-1)j}x_j^2 \Big)+\frac{\partial \phi}{\partial x_{2i-1}}=x_{2i}\Big(\alpha_{2i} \Big(1-\sum\limits_{k=1}^{2n} x_k^2\Big)  +\sum\limits_{j=1}^{2n} \Tilde{A}_{(2i)j}x_j^2\Big).
\end{dmath*}
Note that $\alpha_{2i-1}+\alpha_{2i}=0$. Therefore, 
\begin{align*}
    \frac{\partial \phi}{\partial x_{2i-1}}&=x_{2i}\Big(\sum\limits_{j=1}^{2n} \Tilde{A}_{(2i)j}x_j^2\Big)-2\alpha_{2i-1}x_{2i-1}^2x_{2i}+x_{2i}\Big(\sum\limits_{j=1,j\neq 2i}^{2n} \Tilde{A}_{(2i-1)j}x_j^2 \Big)\\
    &=x_{2i}\Big(-2\alpha_{2i-1}x_{2i-1}^2+\sum\limits_{j=1,j\neq 2i}^{2n} (\Tilde{A}_{(2i)j}+\Tilde{A}_{(2i-1)j})x_j^2\Big)
\end{align*}
Recall that $\phi$ is a function independent of $x_{2i}$. Therefore, $\frac{\partial \phi}{\partial x_{2i-1}}=0$ and hence $\phi$ is independent of $x_{2i-1}$ also.

We take $p\in \{1,\ldots,n\}$ such that $p\neq i$. We have $-\frac{\partial H}{\partial x_{2p}}=P_{2p-1}$. Using the expression of $H$ from \eqref{eq:h-form1}, we obtain
\begin{dmath*}
    \frac{\partial \phi}{\partial x_{2p}}=x_{2p-1}\Big(\alpha_{2p-1}\Big(\sum\limits_{k=1}^{2n} x_k^2-1\Big)-\sum\limits_{j=1}^{2n}\Tilde{A}_{(2p-1)j}x_j^2\Big)+2x_{2i-1}x_{2i}x_{2p}\Big(\Tilde{A}_{(2i-1)2p}-\alpha_{2i-1}\Big).
\end{dmath*}
Recall that $\phi$ is a function independent of $x_{2i-1}$ and $x_{2i}$. Therefore, we have the following observations:
\begin{enumerate}[(a)]
    \item $\Tilde{A}_{(2p-1)(2i-1)}=\alpha_{2p-1}$,
    \item $\Tilde{A}_{(2p-1)(2i)}=\alpha_{2p-1}$,
    \item $\Tilde{A}_{(2i-1)(2p)}=\alpha_{2i-1}$.
\end{enumerate}
Since $i$ and $p$ are arbitrary and $\Tilde{A}_{(2p-1)(2i-1)}+\Tilde{A}_{(2i-1)(2p-1)}=0$, we determine $\alpha_{2p-1}+\alpha_{2i-1}=0$ by observation $(a)$. So, $\alpha_{2q-1}=-\alpha_1$ for $q\in \{2,\ldots,n\}$. Again, $\alpha_3+\alpha_5=0$. Therefore, $\alpha_1=0$ and consequently $\alpha_{2i-1}=0$, for $i\in \{1,\ldots,n\}$. By observation $(i)$, we obtain $\alpha_{2i}=0$. Furthermore, by observations $(ii)$, (a), and (b), we obtain $\Tilde{A}_{rs}=0$ for $r,s\in \{1,\ldots,2n\}$ with $r$ odd. Then, $\Tilde{A}_{rs}=0$ for $r,s\in \{1,\ldots,2n\}$ with $r$ even, by the observation $(iv)$. Therefore, $P_1=\cdots=P_{2n}=0$, which is absurd.
\end{proof}

\begin{proposition}
\label{prop:hom-classify}
The polynomial vector field $(P_1, \ldots, P_{n+1})$ on $\SN^n$ is homogeneous Kolmogorov of degree $m$ if and only if $P_i =x_i\Big( \sum\limits_{j=1}^{n+1} \Tilde{A}_{ij} x_j^2\Big)$ for $i=1, \ldots, n+1$, where $\Tilde{A}_{ij}$ is either zero or a homogeneous polynomial of degree $(m-3)$ such that $(\Tilde{A}_{ij})_{(n+1)\times (n+1)}$ is a skew-symmetric matrix.
\end{proposition}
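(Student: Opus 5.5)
The plan is to start from \Cref{thm:kolm-vfld} and then use homogeneity to remove the $\Tilde{f}_i$ terms.

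\emph{Converse direction.} This is immediate. If $P_i=x_i\sum_j\Tilde{A}_{ij}x_j^2$ with $\Tilde{A}$ skew-symmetric and each nonzero $\Tilde{A}_{ij}$ homogeneous of degree $m-3$, then each $P_i$ is either zero or homogeneous of degree $m$, and $P_i$ is divisible by $x_i$. It is therefore homogeneous Kolmogorov. It is a vector field on $\SN^n$ by \Cref{thm:kolm-vfld}, taking $\Tilde{f}_i=0$. We can also check invariance directly: $\chi\big(\sum x_k^2-1\big)=2\sum_{i,j}x_i^2\Tilde{A}_{ij}x_j^2=0$ by skew-symmetry, so the sphere is invariant with cofactor $0$.

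\emph{Forward direction.} Let $\chi$ be homogeneous Kolmogorov of degree $m$ on $\SN^n$. By \Cref{thm:kolm-vfld} we can write $P_i=x_i\big((1-r^2)\Tilde{f}_i+\sum_j\Tilde{A}_{ij}x_j^2\big)$, where $r^2=\sum x_k^2$. Set $\Tilde{P}_i=P_i/x_i$, which is homogeneous of degree $m-1$ (or zero). The invariance condition for the sphere is $\sum_i x_i^2\Tilde{P}_i=K(r^2-1)$. The left side is homogeneous of degree $m+1$; write $K=\sum_d K_d$ as a sum of homogeneous components. Comparing degrees, the lowest component of $K$ must vanish, and inductively every component $K_d$ with $d\neq m-1$ vanishes. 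The remaining component satisfies $K_{m-1}r^2=\sum x_i^2\Tilde{P}_i$ together with $-K_{m-1}=0$. Hence $K=0$ and $\sum_i x_i^2\Tilde{P}_i=0$.

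Applying \Cref{lem:sum-pi-xi-0} with $k=2$ then gives $\Tilde{P}_i=\sum_j\Tilde{A}_{ij}'x_j^2$ for a skew-symmetric polynomial matrix $\Tilde{A}'$.

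\emph{Main obstacle: homogeneity of the entries.} The lemma does not guarantee that the entries $\Tilde{A}'_{ij}$ are homogeneous of degree $m-3$. To fix this, I replace each entry by its degree-$(m-3)$ homogeneous component. Taking a fixed homogeneous component is linear, so it preserves skew-symmetry. Because $\Tilde{P}_i$ is homogeneous of degree $m-1$ and each $x_j^2$ is homogeneous of degree $2$, the degree-$(m-1)$ part of $\sum_j\Tilde{A}'_{ij}x_j^2$ is exactly $\sum_j(\Tilde{A}'_{ij})_{m-3}x_j^2$, and it equals $\Tilde{P}_i$. All other components cancel in the sum and can be discarded. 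This yields the required form, with each $\Tilde{A}_{ij}$ either zero or homogeneous of degree $m-3$.
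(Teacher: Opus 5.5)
Your proof is correct and follows the paper's argument. Homogeneity of $\sum_i x_i^2\Tilde{P}_i$, compared against the non-homogeneous factor $\sum_k x_k^2-1$, forces the cofactor to vanish, and \Cref{lem:sum-pi-xi-0} with $k=2$ then gives the skew-symmetric form; your preliminary appeal to \Cref{thm:kolm-vfld} is never actually used. Your step of replacing each entry by its degree-$(m-3)$ homogeneous component, which preserves skew-symmetry, supplies a detail the paper skips: the paper reads homogeneity of the $\Tilde{A}_{ij}$ directly off the lemma, which does not state it.
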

\begin{proof}
    Suppose that $\chi:=(P_1,\ldots,P_{n+1})$ is a homogeneous polynomial Kolmogorov vector field of degree $m$ on $\SN^n$. Thus, we have $\chi\Big(\sum\limits_{i=1}^{n+1}x_i^2-1\Big)=2\sum\limits_{i=1}^{n+1}P_ix_i=K\Big(\sum\limits_{i=1}^{n+1}x_i^2-1\Big)$ for some $K\in \RR[x_1,\ldots, x_{n+1}]$. Since $P_1,\ldots,P_{n+1}$ are homogeneous polynomials of same degree, $K$ is zero. Moreover, $P_i=\Tilde{P}_ix_i$ for some $\Tilde{P}_i\in \RR[x_1,\ldots,x_{n+1}]$ with $\deg \Tilde{P}_i= m-1$, for $1\leq i\leq n+1$ since $\chi$ is Kolmogorov. Therefore, $\sum\limits_{i=1}^{n+1}\Tilde{P}_ix_i^2=0$. By \Cref{lem:sum-pi-xi-0}, $\Tilde{P}_i=\sum\limits_{j=1}^{n+1}\Tilde{A}_{ij}x_j^2$ where  $\Tilde{A}_{ij}$ is either zero or a homogeneous polynomial of degree $(m-3)$ and $(\Tilde{A}_{ij})$ is a skew-symmetric matrix.

    The converse part also holds.
\end{proof}

Next, we discuss some necessary conditions for the existence of invariant great $(n-1)$-spheres of a polynomial vector field on $\SN^n$.

\begin{proposition}\cite[Lemma 5.2]{jana2024dynamics}\label{lem:cone-invariant}
    Let $\mathcal{S}:=\Big\{\sum\limits_{i=1}^{n+1} a_ix_i=d\Big\}\cap \SN^n$ be an $(n-1)$-sphere in $\SN^n$ and $$C(\mathcal{S}):=\Big\{(x_1,\dots,x_{n+1})\in \RR^{n+1} ~|~\Big(\sum\limits_{i=1}^{n+1} a_ix_i\Big)^2-d^2\sum\limits_{i=1}^{n+1} x_i^2=0\Big\}.$$ 
   Suppose that $\chi$ is a homogeneous polynomial vector field on $\SN^n$. Then, $\mathcal{S}$ is invariant if and only if $\mathcal{C}(\mathcal{S})$ is invariant for the vector field $\chi$.
\end{proposition}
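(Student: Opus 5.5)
The plan is to use the scaling symmetry of homogeneous vector fields together with the fact that $\SN^n$ is invariant. Write $L=\sum_{i=1}^{n+1}a_ix_i$, $g=L^2-d^2\sum_i x_i^2$, and let $m$ be the degree of $\chi$. The first step is to describe the cone. A point $p\neq 0$ lies in $\mathcal{C}(\mathcal{S})$ if and only if $L(p)=\pm d\,|p|$, that is, if and only if $p/|p|\in \mathcal{S}\cup\mathcal{S}'$, where $\mathcal{S}'=\{L=-d\}\cap\SN^n=-\mathcal{S}$. Hence
$$\mathcal{C}(\mathcal{S})\setminus\{0\}=\{r q : r>0,\ q\in \mathcal{S}\cup(-\mathcal{S})\}\qquad\text{and}\qquad \mathcal{C}(\mathcal{S})\cap\SN^n=\mathcal{S}\cup(-\mathcal{S}).$$

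The second step records two symmetries of the flow, both coming from homogeneity. If $x(t)$ is a solution, then $\lambda x(\lambda^{m-1}t)$ is a solution for every $\lambda>0$. Also, $-x(t)$ is a solution when $m$ is odd, and $-x(-t)$ is a solution when $m$ is even, because $\chi(-x)=(-1)^m\chi(x)$. In either case the antipodal image of an invariant set is invariant. Finally, the origin is an equilibrium, since $m\geq 3$ for Kolmogorov fields on $\SN^n$.

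($\Rightarrow$) Assume $\mathcal{S}$ is invariant. By the antipodal symmetry, $-\mathcal{S}$ is invariant, so $\mathcal{S}\cup(-\mathcal{S})$ is invariant. Take $p=rq\in\mathcal{C}(\mathcal{S})$ with $r>0$ and $q\in\mathcal{S}\cup(-\mathcal{S})$, and let $y(t)$ be the trajectory through $q$; it stays in $\mathcal{S}\cup(-\mathcal{S})$. Then $r\,y(r^{m-1}t)$ is the trajectory through $p$, and it stays in the set of positive multiples of $\mathcal{S}\cup(-\mathcal{S})$, which is contained in $\mathcal{C}(\mathcal{S})$. Together with the equilibrium at $0$, this shows $\mathcal{C}(\mathcal{S})$ is invariant.

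($\Leftarrow$) Assume $\mathcal{C}(\mathcal{S})$ is invariant. Since $\SN^n$ is invariant, so is the intersection $\mathcal{C}(\mathcal{S})\cap\SN^n=\mathcal{S}\cup(-\mathcal{S})$. If $d=0$, this set equals $\mathcal{S}$ and we are done. If $d\neq 0$, then $\mathcal{S}$ and $-\mathcal{S}$ lie in the disjoint closed sets $\{L=d\}$ and $\{L=-d\}$. A trajectory through a point of $\mathcal{S}$ is connected and stays in $\mathcal{S}\cup(-\mathcal{S})$, so it cannot reach $-\mathcal{S}$ and remains in $\mathcal{S}$. Therefore $\mathcal{S}$ is invariant.

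This argument works at the level of sets of trajectories. An algebraic version would ask for a polynomial cofactor with $\chi g=Kg$. That can be obtained from the same picture: $\chi(\sum x_i^2)=0$ as in the proof of \Cref{prop:hom-classify}, and $\chi L = K_0(L-d)+M(\sum x_i^2-1)$ can be homogenized. The main obstacle is this homogenization bookkeeping, so I would present the geometric argument above and only remark on the polynomial cofactor.
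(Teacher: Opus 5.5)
The paper does not prove this proposition; it imports it from \cite{jana2024dynamics}, so your argument has to stand on its own. The paper defines an invariant subset as a union of trajectories, and under that reading your argument does stand. The description of $\mathcal{C}(\mathcal{S})\setminus\{0\}$ as the positive multiples of $\mathcal{S}\cup(-\mathcal{S})$ is correct. So are the rescaling $\lambda x(\lambda^{m-1}t)$, the antipodal symmetry via the parity of $m$, and the connectedness argument separating $\{L=d\}$ from $\{L=-d\}$. One small repair is needed. The proposition concerns arbitrary homogeneous fields on $\SN^n$, not only Kolmogorov ones, so justify $\chi(0)=0$ by homogeneity of degree $m\geq 1$ rather than by $m\geq 3$. (A constant field with $\sum_i x_iP_i=0$ is zero.)

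The part you set aside, however, is the form in which the paper actually uses the result. The set $\mathcal{C}(\mathcal{S})=\{g=0\}$ is an algebraic hypersurface, and the paper defines invariance of such sets by a polynomial cofactor. The proof of \Cref{thm_sp_grtsp} invokes the proposition precisely to obtain $K$ with $\chi g=Kg$. Your trajectory argument does not produce $K$, so in that reading the forward direction is unproved.

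The missing step is $g\mid\chi(g)$. Since $\chi(\sum_i x_i^2)=0$, we have $\chi(g)=2Lh$, where $h:=\chi(L)=\sum_i a_iP_i$ is homogeneous of degree $m$. Your argument does show that $h$ vanishes on $\mathcal{S}$. Via $h(rq)=r^mh(q)$ and $h(-q)=(-1)^mh(q)$, it then vanishes on all of $\mathcal{C}(\mathcal{S})$. But vanishing on a real zero set does not by itself give divisibility; you need the following case analysis.
\begin{enumerate}[(i)]
\item For $n\geq 2$ and $0<|d|<1$: $g$ is an irreducible indefinite quadratic form of rank $n+1\geq 3$. The real Nullstellensatz for sign-changing irreducible polynomials then yields $g\mid h$.
\item For $n=1$: $g$ is a product of two distinct real linear forms, each of which divides $h$.
\item For $d=0$: $L\mid h$, which already gives $L^2\mid 2Lh$.
\end{enumerate}

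Alternatively, your homogenization idea does close the gap. Substitute $x\mapsto x/s$ in $\chi(L)=K_0(L-d)+M(\sum_i x_i^2-1)$ and reduce modulo $s^2-\sum_i x_i^2$. The quotient is a free $\RR[x_1,\ldots,x_{n+1}]$-module on $1,s$, and you get $s^eh\equiv\tilde K_0(x,s)(L-ds)$. Multiplying by $L+ds$ and comparing components shows, for $d\neq 0$, that $g$ divides $(\sum_i x_i^2)^b h$. Since $g$ is coprime to the irreducible form $\sum_i x_i^2$, this gives $g\mid h$.

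The converse direction is unaffected, since a cofactor for $g$ already makes $\{g=0\}$ a union of trajectories.
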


\begin{theorem}\label{thm_sp_grtsp}
Suppose that $\chi=(P_1,\dots,P_{n+1})$ is a homogeneous polynomial Kolmogorov vector field on $\SN^n$ of degree at most four. If $\{x_{n+1}+d=0\}\cap \SN^n$ is an invariant $(n-1)$-sphere, it must be a great $(n-1)$-sphere.
\end{theorem}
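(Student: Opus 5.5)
The plan is to turn invariance of the small sphere into a polynomial divisibility statement using \Cref{prop:hom-classify} and \Cref{lem:cone-invariant}, and then read off coefficients. By \Cref{prop:hom-classify}, since $\chi$ is homogeneous Kolmogorov on $\SN^n$ of degree $m\in\{3,4\}$, we have $P_i=x_i\sum_{j}\Tilde{A}_{ij}x_j^2$, where $(\Tilde{A}_{ij})$ is skew-symmetric and its entries are constants ($m=3$) or linear forms ($m=4$). In particular $\sum_i x_iP_i=0$.

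Suppose $\{x_{n+1}+d=0\}\cap\SN^n$ is invariant with $0<|d|<1$; we aim for a contradiction. By \Cref{lem:cone-invariant}, the cone
\[
g:=(1-d^2)x_{n+1}^2-d^2\sum_{i=1}^{n}x_i^2=0
\]
is invariant, so $\chi g=Kg$ with $K$ homogeneous of degree $m-1$. A direct computation, using $\sum_{i\le n}x_iP_i=-x_{n+1}P_{n+1}$, collapses the left side:
\[
\chi g=2(1-d^2)x_{n+1}P_{n+1}+2d^2x_{n+1}P_{n+1}=2x_{n+1}^2L,\qquad L:=\sum_{j=1}^{n+1}\Tilde{A}_{(n+1)j}x_j^2 .
\]
Hence $g$ divides $x_{n+1}^2L$.

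Next we remove the factor $x_{n+1}^2$. Restricting to $x_{n+1}=0$ gives $g|_{x_{n+1}=0}=-d^2\sum_{i\le n}x_i^2\neq 0$, so no irreducible factor of $g$ is (a multiple of) $x_{n+1}$. Therefore $g$ divides $L$. This is the step needing some care when $g$ is reducible (e.g. $n=1$, where $g$ splits into two distinct lines, neither equal to $x_2=0$). Unique factorization still handles it, since every factor of $g$ is coprime to $x_{n+1}$.

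Now we compare degrees and coefficients, using $\Tilde{A}_{(n+1)(n+1)}=0$.
\begin{itemize}
\item If $m=3$, then $L=cg$ for a constant $c$. But $L$ has no $x_{n+1}^2$ term while $g$ has coefficient $1-d^2\neq0$, so $c=0$.
\item If $m=4$, then $L=g\ell$ with $\ell$ linear. Since $\Tilde{A}_{(n+1)(n+1)}=0$, the cubic $L$ contains no monomial $x_{n+1}^2x_k$ for any $k$ (including $x_{n+1}^3$). The coefficient of $x_{n+1}^2x_k$ in $g\ell$ is $(1-d^2)\ell_k$, which forces $\ell=0$.
\end{itemize}
In either case $L=0$, i.e. $P_{n+1}\equiv0$.

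The main obstacle is exactly this residual case. When $P_{n+1}\equiv 0$ (for example $P_1=x_1x_2^2$, $P_2=-x_1^2x_2$, $P_3=0$), $x_{n+1}$ is a first integral and every slice $x_{n+1}=-d$ is invariant. So the argument proves the statement only under the implicit nondegeneracy assumption $P_{n+1}\not\equiv 0$. I would state this hypothesis explicitly, or equivalently read the theorem as: a non-great invariant sphere $x_{n+1}=-d$ forces $P_{n+1}=0$, which contradicts that assumption and gives $d=0$.
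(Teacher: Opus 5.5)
Your proof is correct and follows the paper's argument essentially step for step: you pass to the invariant cone via \Cref{lem:cone-invariant}, use $\sum_i x_iP_i=0$ to reduce invariance to $2x_{n+1}^2\sum_{j}\Tilde{A}_{(n+1)j}x_j^2=Kg$, strip the factor $x_{n+1}^2$ because $x_{n+1}\nmid g$ when $d\neq 0$, and eliminate the remaining quotient of degree at most one by comparing the coefficients of the $x_{n+1}^2x_k$-type monomials, which requires $1-d^2\neq0$. Your caveat about the residual case is well taken: the paper reaches the same conclusion $P_{n+1}=0$ and simply declares that it contradicts homogeneity, so it implicitly reads ``homogeneous'' as requiring every component to be a nonzero form of degree $m$, which is exactly the nondegeneracy hypothesis you propose to state explicitly.
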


\begin{proof}
   We assume that $\chi$ is of the form given in \Cref{prop:hom-classify}. Suppose that $\{x_{n+1}+d=0\}\cap \SN^n$ is an invariant $(n-1)$-sphere for $\chi$. We need to prove that $d=0$.
    By \Cref{lem:cone-invariant}, $\mathcal{S}=\{x_{n+1}+d=0\}\cap \SN^n$ is invariant if and only if $\mathcal{C}(\mathcal{S})=\Big\{x_{n+1}^2-d^2\sum\limits_{i=1}^{n+1}x_i^2=0\Big\}$ is invariant. If $\mathcal{C}(\mathcal{S})$ is invariant, there exists $K\in \RR[x_1,\ldots,x_{n+1}]$ such that
    \begin{align*}
        &\chi\Big(x_{n+1}^2-d^2\sum\limits_{i=1}^{n+1}x_i^2\Big)=K\Big(x_{n+1}^2-d^2\sum\limits_{i=1}^{n+1}x_i^2\Big) \\
        \implies& 2P_{n+1} x_{n+1}=K\Big(x_{n+1}^2-d^2\sum\limits_{i=1}^{n+1}x_i^2\Big)~\mbox{since $\chi$ is homogeneous}.\\
    \implies & 2x_{n+1}^2\Big(\sum_{j=1}^n \Tilde{A}_{(n+1)j}x_j^2\Big)=K\Big(x_{n+1}^2-d^2\sum\limits_{i=1}^{n+1}x_i^2\Big).
    \end{align*}
 Assume that $d\neq 0$. Then $x_{n+1}^2$ divides $K$. Note that $\deg K\leq 3$ since $\deg \chi\leq 4$. So, 
 \begin{equation}\label{d=0}
     \sum_{j=1}^n \Tilde{A}_{(n+1)j}x_j^2=\widehat{K}\Big(x_{n+1}^2-d^2\sum\limits_{i=1}^{n+1}x_i^2\Big),
 \end{equation}
for some polynomial $\widehat{K}$ of degree at most one. If $\widehat{K}\neq 0$, then there is a monomial $\alpha x_{n+1}^2$ or $\alpha x_jx_{n+1}^2$ in the right-hand side of \eqref{d=0} for some $\alpha\in \RR\setminus \{0\}$ and $j\in \{1,\ldots,n+1\}$. But, such a monomial does not exist in the left-hand side of \eqref{d=0} since $\deg \Tilde{A}_{(n+1)j}\leq 1$ for $j=1,\ldots,n$. So, $\widehat{K}$ is zero and consequently $P_{n+1}=0$. This contradicts the fact that $\chi$ is homogeneous. Hence, $d$ is zero.
\end{proof}

\begin{proof}[\textbf{Proof of  \Cref{thm_inv_grt_deg1}}]
Suppose that $\chi$ is of the form given in \Cref{prop:hom-classify}. Since $\chi$ is cubic, $P_i=x_ix^t\Tilde{A}_ix$ where $x^t=(x_1~\ldots~x_{n+1})$ and $\Tilde{A}_i={\rm diag}(\Tilde{A}_{i1},\ldots,\Tilde{A}_{i(n+1)})$ is a constant matrix. If $\sum\limits_{i=1}^{n+1}a_ix_i=0$ is invariant, then there exists a homogeneous quadratic polynomial $K$ such that
\begin{equation}\label{eq:eq1-inv-sphere}
\sum\limits_{i=1}^{n+1}a_iP_i=K\Big(\sum\limits_{i=1}^{n+1}a_ix_i\Big).    
\end{equation}
Moreover, there exists a constant symmetric matrix $B$ of order $(n+1)$ such that $K=x^t Bx$. Therefore, we have
$$\sum\limits_{i=1}^{n+1}a_ix_ix^t \Tilde{A}_ix=(x^tBx)\Big(\sum\limits_{i=1}^{n+1}a_ix_i\Big).$$
This simplifies to 
\begin{equation}\label{eq:deg3-hom-inv-sphere}
    x^t \Big(\sum\limits_{i=1}^{n+1} a_ix_i(\Tilde{A}_i-B)\Big)x=0.
\end{equation}
If $x_1=\cdots=x_{n+1}=1$, then the above equality represents that the sum of all the elements of the matrix $\sum\limits_{i=1}^{n+1}a_i(\Tilde{A}_i-B)=0$. Consequently, sum of all the elements of $\sum\limits_{i=1}^{n+1}a_i\Tilde{A}_i=$ $(a_1+\cdots+a_{n+1})\times$(sum of all the elements of $B$). Observe that $$a_i\Tilde{A}_i={\rm diag}(a_i\Tilde{A}_{i1},\ldots,a_i\Tilde{A}_{i(n+1)}).$$ Therefore, $\sum\limits_{i,j=1}^{n+1} a_i\Tilde{A}_{ij}=(a_1+\cdots+a_{n+1})\times$(sum of all the elements of $B$). This proves $(i)$.

Recall that $P_i=x_i\Big(\sum\limits_{j=1}^{n+1}\Tilde{A}_{ij}x_j^2\Big)$. If $x_i=a_i$ for $1\leq i\leq n+1$, then \eqref{eq:eq1-inv-sphere} gives
\begin{align*}
        &\sum\limits_{i=1}^{n+1} a_i^2 \sum\limits_{j=1}^{n+1}\Tilde{A}_{ij}a_j^2=(a_1^2+\cdots+a_{n+1}^2)K(a_1,\ldots,a_{n+1})\\
    \implies & (a_1^2~\cdots~a_{n+1}^2) \Tilde{A} (a_1^2~\cdots~a_{n+1}^2)^t=(a_1^2+\cdots+a_{n+1}^2)K(a_1,\ldots,a_{n+1}),
\end{align*}
where $\Tilde{A}=(\Tilde{A}_{ij})$. Since $\Tilde{A}$ is a skew-symmetric matrix, we obtain $K(a_1,\ldots,a_{n+1})=0$.
\end{proof}

\section{Integrability of cubic vector fields}\label{sec:inv_mer_par}
In this section,  we examine when a cubic Kolmogorov vector field on $\SN^n$ has a Darboux first integral. In several cases, our observations are necessary and sufficient.

We discuss invariant hyperplanes $a_0+\sum\limits_{i=1}^{n+1}a_ix_i=0$ of the cubic vector fields given by \eqref{eq:deg-m-form}. Observe that $x_i=0$ is invariant with a cofactor of the form $k_0+\sum\limits_{i=1}^{n+1} k_ix_i^2$. Next result investigates invariant hyperplanes with at least two $a_j$ non-zero for $0\leq j\leq n+1$.

\begin{theorem}\label{thm_hypln_cofac}
    Suppose that $\chi$ is a cubic Kolmogorov vector field on $\SN^n$ of the form \eqref{eq:deg-m-form}. Then the hyperplane $a_0+\sum\limits_{i=1}^{n+1}a_ix_i=0$ with at least two $a_j$ non-zero for $0\leq j\leq n+1$ is invariant with cofactor $k_0+\sum\limits_{i=1}^{n+1}k_ix_i^2$ if and only if one of the following holds.
    \begin{enumerate}[(i)]
        \item $a_0\neq 0,~k_0=0,~a_i\alpha_i=0$, $a_i\Tilde{A}_{ij}=0$, and $k_i=0$ for $1\leq i,j\leq n+1$.
        \item $a_0=0$. If $a_{i_1},a_{i_2}\neq 0$ then $\alpha_{i_1}=\alpha_{i_2}=k_0,~\Tilde{A}_{(i_1)(i_2)}=0,~\Tilde{A}_{(i_1)j}=\Tilde{A}_{(i_2)j}$, and  $k_j=\Tilde{A}_{(i_1)j}-k_0$, for $1\leq j\leq n+1$.
    \end{enumerate}
\end{theorem}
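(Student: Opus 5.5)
Write $\ell:=a_0+\sum_{i=1}^{n+1}a_ix_i$ and $K:=k_0+\sum_{i=1}^{n+1}k_ix_i^2$. Since $\chi$ is cubic, \Cref{thm:kolm-vfld} lets us take $\Tilde f_i=\alpha_i\in\RR$ and $\Tilde A$ a constant skew-symmetric matrix. The plan is to expand the invariance identity
\begin{equation*}
\sum_{i=1}^{n+1}a_ix_i\Big(\alpha_i\Big(1-\sum_{k}x_k^2\Big)+\sum_{j}\Tilde A_{ij}x_j^2\Big)=\Big(k_0+\sum_{i}k_ix_i^2\Big)\Big(a_0+\sum_{i}a_ix_i\Big)
\end{equation*}
and compare coefficients of monomials, degree by degree. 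The left side has only monomials of degree $1$ and $3$, so the right side's constant term and degree-$2$ part must vanish. This gives $a_0k_0=0$ and $a_0k_i=0$ for all $i$.

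In degree $1$, comparing coefficients of $x_i$ gives $a_i\alpha_i=k_0a_i$. In degree $3$, the left side contributes $a_ix_i^3(\Tilde A_{ii}-\alpha_i)=-a_i\alpha_ix_i^3$, using $\Tilde A_{ii}=0$. It also contributes, for $i\ne j$, terms $a_ix_ix_j^2(\Tilde A_{ij}-\alpha_i)$. The right side's cubic part is $\sum_{i,j}k_ja_ix_ix_j^2$. Matching the coefficients of $x_i^3$ and of $x_ix_j^2$ with $i\neq j$, which are distinct monomials, yields
\begin{equation*}
-a_i\alpha_i=k_ia_i,\qquad a_i(\Tilde A_{ij}-\alpha_i)=k_ja_i\quad (i\ne j).
\end{equation*}
These relations are necessary and sufficient for invariance with this cofactor, so both directions of the theorem reduce to solving the system.

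\textbf{Case $a_0\neq0$.} Here $k_0=0$ and all $k_i=0$. Then $a_i\alpha_i=0$, and for $i\ne j$ the relation becomes $a_i\Tilde A_{ij}=0$; the case $i=j$ is trivial. This is (i), and conversely these conditions make every displayed relation hold.

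\textbf{Case $a_0=0$.} Take $a_{i_1},a_{i_2}\neq0$, which exist by the hypothesis. Dividing by $a_{i}$ gives $\alpha_i=k_0$ and $k_i=-\alpha_i=-k_0$ for $i\in\{i_1,i_2\}$, and $k_j=\Tilde A_{ij}-\alpha_i=\Tilde A_{ij}-k_0$ for $j\neq i$. This last formula also holds for $j=i$, since $\Tilde A_{ii}=0$. So $k_j=\Tilde A_{i_1j}-k_0=\Tilde A_{i_2j}-k_0$ for all $j$, which gives $\Tilde A_{i_1j}=\Tilde A_{i_2j}$. Taking $j=i_2$ gives $\Tilde A_{i_1i_2}=\Tilde A_{i_2i_2}=0$. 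This is (ii). For the converse, these conditions applied to every pair of nonzero $a_i$ make each relation hold.

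The main obstacle is reading (ii) correctly: its conditions must be imposed on every pair of indices with nonzero $a_i$, not just one pair. The coefficient bookkeeping also has to keep $x_i^3$ separate from $x_ix_j^2$, and the skew-symmetry fact $\Tilde A_{ii}=0$ is what makes the formula for $k_j$ uniform in $j$.
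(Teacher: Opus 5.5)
Your proof is correct and follows essentially the same route as the paper: expand $\sum_i a_iP_i=K\ell$, compare the constant, degree-$1$, degree-$2$ and degree-$3$ parts, and split on whether $a_0=0$, with $\Tilde{A}_{ii}=0$ giving $k_i=-k_0$ for $a_i\neq 0$ and hence $\Tilde{A}_{i_1i_2}=0$. Your reading of (ii) as a condition on every pair of indices with nonzero $a_i$ is the one needed for the converse, which the paper only asserts with ``one can check.''
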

\begin{proof}
   Suppose that the hyperplane $a_0+\sum\limits_{i=1}^{n+1}a_ix_i=0$ is invariant. So,
    \begin{equation}\label{plane-inv}
        \sum_{i=1}^{n+1}a_ix_i \Big(\Big(1-\sum_{k=1}^{n+1}x_k^2\Big)\alpha_i+\sum_{j=1}^{n+1}\Tilde{A}_{ij}x_j^2 \Big)=\Big(k_0+ \sum_{i=1}^{n+1}k_ix_i^2 \Big)\Big(a_0+\sum_{i=1}^{n+1}a_ix_i \Big).
    \end{equation}
    Comparing the degree three, degree two, degree one, and constant terms in \eqref{plane-inv}, we obtain
    \begin{equation}\label{degree-3-mon}
        \sum_{i=1}^{n+1}a_ix_i\Big(-\alpha_i\sum_{k=1}^{n+1}x_k^2+\sum_{j=1}^{n+1}\Tilde{A}_{ij}x_j^2\Big)=\Big(\sum_{i=1}^{n+1}k_ix_i^2\Big)\Big(\sum_{i=1}^{n+1}a_ix_i\Big),
    \end{equation}
        \begin{equation}\label{degree-2-mon}
        0=a_0\Big(\sum_{i=1}^{n+1}k_ix_i^2\Big),
    \end{equation}
        \begin{equation}\label{degree-1-mon}
        \sum_{i=1}^{n+1}a_i \alpha_i x_i=k_0\sum_{i=1}^{n+1}a_ix_i,~\mbox{and}
    \end{equation}
        \begin{equation}\label{constant-term}
        0=k_0a_0,
    \end{equation}
respectively.

\noindent\textbf{Case-1:} $a_0\neq 0$. Then $k_0$ must be zero. Also, from \eqref{degree-2-mon} and \eqref{degree-1-mon}, we obtain $k_i=0$ and $a_i\alpha_i=0$, respectively, for $1\leq i\leq n+1$. Hence, from \eqref{degree-3-mon}, we have $\sum\limits_{i=1}^{n+1}a_ix_i(\sum\limits_{j=1}^{n+1}\Tilde{A}_{ij}x_j^2)=0.$ Therefore, $a_i\Tilde{A}_{ij}=0$ for $1\leq i,j\leq n+1$.

\noindent\textbf{Case-2:} $a_0=0$. Observe that \eqref{degree-1-mon} implies $a_i\alpha_i=k_0a_i$ for $1\leq i\leq n+1$. Hence, rewriting \eqref{degree-3-mon}, we get
\begin{equation}\label{rewrite-degree-3-mon}
    \sum_{i=1}^{n+1}\sum_{j=1}^{n+1}a_i\Tilde{A}_{ij}x_ix_j^2=\Big(\sum_{i=1}^{n+1}(k_i+k_0)x_i^2 \Big) \Big(\sum_{i=1}^{n+1}a_ix_i \Big).
\end{equation}
So, $\sum\limits_{i=1}^{n+1}\sum\limits_{j=1}^{n+1}a_i(\Tilde{A}_{ij}-k_j-k_0)x_ix_j^2=0.$ Consequently, 
\begin{equation}\label{a-i-prod}
    a_i(\Tilde{A}_{ij}-k_j-k_0)=0,
\end{equation}
for $1\leq i,j\leq n+1$. Suppose that $a_{i_1},a_{i_2}\neq 0$ for some $i_1,i_2\in \{1,\ldots,n+1\}$ with $i_1\neq i_2$. Then, it follows from \eqref{degree-1-mon} that $\alpha_{i_1}=\alpha_{i_2}=k_0$. By \eqref{a-i-prod}, $k_j=\Tilde{A}_{(i_1)j}-k_0$ and $\Tilde{A}_{(i_1)j}=\Tilde{A}_{(i_2)j}$. Moreover,
$$\Tilde{A}_{(i_1)(i_2)}-k_{i_2}-k_0=0~\mbox{and}~\Tilde{A}_{(i_2)(i_1)}-k_{i_1}-k_0=0.$$ In \eqref{rewrite-degree-3-mon}, comparing the coefficients of $x_{i_1}^3$ and $x_{i_2}^3$, together with the fact that $(\Tilde{A}_{ij})$ is a skew-symmetric matrix, we get $k_{i_1}=k_{i_2}=-k_0$. Therefore, $\Tilde{A}_{(i_1)(i_2)}=\Tilde{A}_{(i_2)(i_1)}=0$.

One can check that the converse part is also true.
\end{proof}

\begin{theorem}\label{first-integral-ns}
    Suppose that $\chi=(P_1,\ldots,P_{n+1})$ is a cubic Kolmogorov vector field on $\SN^n$ such that
    $$P_i=x_i\Big(\alpha_i\Big(1-\sum_{k=1}^{n+1}x_k^2\Big)+\sum_{j=1}^{n+1}\Tilde{A}_{ij}x_j^2\Big)$$ where
    $\alpha_i\in \RR$ for $i=1,\ldots,n+1$ and $\Tilde{A}=(\Tilde{A}_{ij})$ is a constant skew-symmetric matrix. If $\chi$ has an invariant algebraic hypersurface $g_{n+2}=0$ other than $x_i=0$ for $i=1,\ldots,n+1$ with cofactor $K_{n+2}=k_0+\sum\limits_{i=1}^{n+1}k_ix_i^2$ then $\chi$ has the Darboux first integral $$H:=g_{n+2}^{\beta_{n+2}}\prod_{i=1}^{n+1}x_i^{\beta_i}$$ if and only if the matrix
    \begin{equation}\label{eq:matrix-b}
        B=\begin{pmatrix}
        \alpha_1&-\alpha_1&\Tilde{A}_{12}-\alpha_1&\cdots&\Tilde{A}_{1(n+1)}-\alpha_1\\
        \alpha_2&\Tilde{A}_{21}-\alpha_2&-\alpha_2&\cdots&\Tilde{A}_{2(n+1)}-\alpha_2\\
        \vdots&\vdots&\vdots&\vdots&\vdots\\
        \alpha_{n+1}&\Tilde{A}_{(n+1)1}-\alpha_{n+1}&\Tilde{A}_{(n+1)2}-\alpha_{n+1}&\cdots&-\alpha_{n+1}\\
        k_0&k_1&k_2&\cdots&k_{n+1}
    \end{pmatrix}_{(n+2)\times (n+2)}
    \end{equation}
    has rank less than $(n+2)$.
\end{theorem}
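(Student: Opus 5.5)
The plan is to reduce the existence of $H$ to a linear condition on the cofactors, and then read that condition off monomial by monomial. The reduction goes as follows. Each coordinate hyperplane $x_i=0$ is invariant with cofactor
$$K_i=\alpha_i\Big(1-\sum_{k=1}^{n+1}x_k^2\Big)+\sum_{j=1}^{n+1}\Tilde{A}_{ij}x_j^2,$$
as observed in the proof of \Cref{thm_cubic_firint}, and $g_{n+2}=0$ is invariant with cofactor $K_{n+2}$. For $H=g_{n+2}^{\beta_{n+2}}\prod x_i^{\beta_i}$ we compute, on the open set where $H$ is defined,
$$\chi H = H\Big(\sum_{i=1}^{n+1}\beta_iK_i+\beta_{n+2}K_{n+2}\Big).$$
Hence $H$ is a first integral if and only if $\sum_{i=1}^{n+1}\beta_iK_i+\beta_{n+2}K_{n+2}=0$ identically. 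Here $(\beta_1,\dots,\beta_{n+2})$ is required to be nonzero, in the same spirit as the Darboux proposition in Section \ref{sec:basic_on_vf}. One direction of this equivalence is exactly that proposition. For the converse, $H\neq 0$ on a dense open set, so $\chi H=0$ forces the polynomial factor to vanish.

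Next I express the identity in the monomial basis. Every cofactor lies in the span of $1,x_1^2,\dots,x_{n+1}^2$. The coefficient vector of $K_i$ with respect to this basis is
$$(\alpha_i,\ \Tilde{A}_{i1}-\alpha_i,\ \dots,\ \Tilde{A}_{i(n+1)}-\alpha_i),$$
and since $\Tilde{A}_{ii}=0$ the diagonal entry is $-\alpha_i$. This vector is precisely the $i$-th row of $B$. The coefficient vector of $K_{n+2}$ is $(k_0,k_1,\dots,k_{n+1})$, which is the last row of $B$. The monomials $1,x_1^2,\dots,x_{n+1}^2$ are linearly independent, so the vanishing of the linear combination is equivalent to $\beta^tB=0$, where $\beta=(\beta_1,\dots,\beta_{n+2})^t$.

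It follows that a nonzero $\beta$ giving a first integral exists if and only if the rows of $B$ are linearly dependent. Since $B$ is the square matrix displayed in \eqref{eq:matrix-b}, this happens if and only if $\operatorname{rank}(B)<n+2$, which proves both directions.

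The only delicate point is non-constancy of $H$, which Definition \ref{def_firs_intg} requires of a first integral. I would handle it by noting that a nonzero exponent vector on distinct irreducible factors gives a non-constant $H$. The factors are indeed distinct, because $g_{n+2}$ differs from each $x_i$ by hypothesis. If $g_{n+2}$ is reducible or shares factors with the $x_i$, one regroups the factors, so I expect this step to need only a brief remark rather than a real argument.
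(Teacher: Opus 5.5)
Your proposal is correct and is essentially the paper's proof. Both arguments read the cofactors of $x_i=0$ and $g_{n+2}=0$ in the basis $1,x_1^2,\dots,x_{n+1}^2$ as the rows of $B$, so the Darboux relation $\sum_{i}\beta_iK_i+\beta_{n+2}K_{n+2}=0$ with $\beta\neq 0$ becomes $\beta^tB=0$; you also justify the converse Darboux direction and non-constancy, which the paper leaves implicit. One caution: your regrouping remark fails when $g_{n+2}$ is a monomial such as $x_1x_2$, since then $\beta=(1,1,0,\dots,0,-1)$ gives $H\equiv 1$, so the non-constancy step needs $g_{n+2}$ irreducible (or at least not a monomial), as the paper tacitly assumes.
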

\begin{proof}
The hyperplane $x_i=0$ is invariant with the cofactor $K_i=\alpha_i+\sum\limits_{j=1}^{n+1}(\Tilde{A}_{ij}-\alpha_i) x_j^2$ where $\Tilde{A}_{ii}=0$. By Darboux Integrability Theory \cite{Dar78, DLA06}, $H=g_{n+2}^{\beta_{n+2}}\prod\limits_{i=1}^{n+1}x_i^{\beta_i}$ is a first integral of $\chi$ if and only if 
\begin{equation}\label{sum-beta-i-zero}
    \beta_{n+2}K_{n+2}+\sum_{i=1}^{n+1}\beta_i K_i=0,
\end{equation}
where not all $\beta_j$ is zero for $j\in \{1,\ldots, n+2\}$. Next, we compare the coefficients of each monomial in \eqref{sum-beta-i-zero}.
\begin{align*}
 \mbox{Constant-term}:&~\beta_1\alpha_1+\beta_2\alpha_2+\cdots+\beta_{n+1}\alpha_{n+1}+\beta_{n+2}k_0=0\\
    x_1^2:&~\beta_1(-\alpha_1)+\beta_2(\Tilde{A}_{21}-\alpha_2)+\cdots+\beta_{n+1}(\Tilde{A}_{(n+1)1}-\alpha_{n+1})+\beta_{n+2}k_1=0\\
    x_2^2:&~\beta_1(\Tilde{A}_{12}-\alpha_1)+\beta_2(-\alpha_2)+\cdots+\beta_{n+1}(\Tilde{A}_{(n+1)2}-\alpha_{n+1})+\beta_{n+2}k_2=0\\
    &\hspace{50mm}\vdots\\
    x_{n+1}^2:&~\beta_1(\Tilde{A}_{1(n+1)}-\alpha_1)+\beta_2(\Tilde{A}_{2(n+1)}-\alpha_2)+\cdots+\beta_{n+1}(-\alpha_{n+1})+\beta_{n+2}k_{n+1}=0.
\end{align*}
Hence, \eqref{sum-beta-i-zero} holds if and only if $\begin{pmatrix}
    \beta_1&\beta_2&\cdots&\beta_{n+2}
\end{pmatrix}$ is a non-trivial solution of the matrix equation $yB=0$. So, $H$ is a first integral if and only if ${\rm rank}(B)<(n+2)$.
\end{proof}

We remark that if $\chi$ is a homogeneous polynomial vector field on $\SN^n$, then any sphere $g_r:=\sum\limits_{i=1}^{n+1}x_i^2-r^2=0$ is a first integral of $\chi$, see \cite[Theorem 5.4]{jana2024dynamics}. The next result proves the converse part when $\chi$ is a cubic Kolmogorov vector field on $\SN^n$.

\begin{proposition}\label{prop_fint1}
Let $\chi$ be a cubic Kolmogorov vector field on $\SN^n$. If $g_r:=\sum\limits_{i=1}^{n+1}x_i^2-r^2=0$ with $r\neq 0,\pm 1$ is an invariant sphere for $\chi$, then the vector field is homogeneous and $g_r$ is a first integral.
\end{proposition}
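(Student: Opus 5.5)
By \Cref{thm:kolm-vfld}, since $\chi$ is cubic, each $\Tilde{f}_i=\alpha_i$ is a constant and $\Tilde{A}=(\Tilde{A}_{ij})$ is a constant skew-symmetric matrix, so
$$P_i=x_i\Big(\alpha_i\Big(1-\sum_{k=1}^{n+1}x_k^2\Big)+\sum_{j=1}^{n+1}\Tilde{A}_{ij}x_j^2\Big).$$
The plan is to compute $\chi g_r$ directly, impose $\chi g_r=Kg_r$, and show that this forces every $\alpha_i=0$. Once that holds, $\chi$ is homogeneous (of the form in \Cref{prop:hom-classify}), and $\chi g_r=0$, so $g_r$ is a first integral.

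First, $\chi g_r=2\sum_i x_iP_i$. The skew-symmetric part gives $\sum_{i,j}\Tilde{A}_{ij}x_i^2x_j^2=0$, so
$$\chi g_r=2\Big(1-\sum_{k}x_k^2\Big)\sum_{i=1}^{n+1}\alpha_ix_i^2 .$$
This is a polynomial of degree at most $4$. Hence the cofactor $K$ in $\chi g_r=Kg_r$ has degree at most $2$.

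Next, write $S=\sum_k x_k^2$ and $Q=\sum_i\alpha_ix_i^2$. The identity to exploit is
$$2(1-S)\,Q=K\,(S-r^2).$$
The polynomials $1-S$ and $S-r^2$ are coprime irreducible quadrics (irreducible for $n\ge1$), because $r^2\neq1$. So $S-r^2$ must divide $Q$. Since $Q$ is homogeneous of degree two and $S-r^2$ has a nonzero constant term (as $r\neq0$), this forces $Q=0$. Concretely, we can compare homogeneous components: write $Q=c(S-r^2)$ with $c$ constant by degree, and note that the constant part gives $cr^2=0$. Therefore $\alpha_i=0$ for all $i$, and also $K=0$.

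An alternative to the divisibility argument is to evaluate on the sphere $S=r^2$. There $(1-r^2)Q=0$, so $Q$ vanishes on $\{S=r^2\}$. By homogeneity of $Q$ and $r\neq0$, $Q$ then vanishes on all of $\RR^{n+1}$, giving $\alpha_i=0$. This version avoids any irreducibility discussion, and I would present it that way.

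The main obstacle is only making precise why $Q$ vanishing on the sphere of radius $|r|$ forces $Q\equiv0$. Every nonzero point is a positive multiple of a point on that sphere, and $Q$ is homogeneous, so $Q$ vanishes everywhere; hence each $\alpha_i=0$. With $\alpha_i=0$ we get $P_i=x_i\sum_j\Tilde{A}_{ij}x_j^2$, which is homogeneous cubic, and then $\chi g_r=0$ is immediate. This also matches \cite[Theorem 5.4]{jana2024dynamics}.
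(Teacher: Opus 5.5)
Your proof is correct. Your preferred version reaches the conclusion by a different, more elementary route than the paper.

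In your notation $S=\sum_k x_k^2$ and $Q=\sum_i\alpha_i x_i^2$, both arguments start from the identity $(1-S)Q=K(S-r^2)$; the paper absorbs the factor $2$ into $K$. The paper proceeds in two steps:
\begin{enumerate}[(i)]
\item It compares the constant and linear terms, using $r\neq 0$, to show that $K$ is either zero or a homogeneous quadratic.
\item It uses the irreducibility of $1-S$. A nonzero homogeneous $K$ cannot be divisible by the non-homogeneous $1-S$, so $1-S$ would have to divide $S-r^2$, which contradicts $r^2\neq 1$. Hence $K=0$, and then $Q=0$.
\end{enumerate}

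Your first, divisibility-based version is essentially the mirror image of this: you let the irreducible $S-r^2$ divide $Q$ instead of letting $1-S$ divide $S-r^2$.

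Your evaluation version avoids both the analysis of $K$ and any appeal to unique factorization. Restricting to the invariant sphere gives $(1-r^2)Q=0$ on $\{S=r^2\}$. Since $r\neq0$, that sphere has positive radius and its positive multiples fill $\RR^{n+1}\setminus\{0\}$, so homogeneity forces $Q\equiv 0$.

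This makes it transparent where each hypothesis enters: $r^2\neq 1$ is used to cancel the factor $1-r^2$, and $r\neq 0$ is used for the scaling step. The paper's route has the minor advantage of producing $K=0$ directly inside the argument. In your route, $K=0$ follows afterwards from $Kg_r=\chi g_r=0$.
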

\begin{proof}
If $\chi=(P_1, \ldots, P_{n+1})$ is a cubic Kolmogorov vector field on $\SN^n$ then $$P_i = x_i \Big( \Big(1- \sum_{k=1}^{n+1} x^2_k\Big) \alpha_i + \sum_{j=1}^{n+1} \Tilde{A}_{ij} x^2_j \Big), $$
where $ (\Tilde{A}_{ij} ) $ is a constant skew-symmetric matrix  of  order $(n+1)$ and $\alpha_i \in \RR$ for $i=1, \ldots, n+1$. Suppose that $g_r=0$ is invariant for $\chi$. Then,
\begin{equation}\label{second-sphere}
    \Big(1-\sum_{k=1}^{n+1} x^2_k\Big)\sum_{i=1}^{n+1} \alpha_i x_i^2 + \sum_{i, j=1}^{n+1}\Tilde{A}_{ij} x_i^2x_j^2 = K \Big(\sum_{i=1}^{n+1}x_i^2-r^2\Big)
\end{equation}
for some $K \in \RR[x_1,\ldots,x_{n+1}]$ with $\deg K\leq 2$. Each monomial in the left-hand side of \eqref{second-sphere} has degree at least two. Hence, either $K$ is zero or a quadratic homogeneous polynomial. Observe that $\sum\limits_{i, j=1}^{n+1}\Tilde{A}_{ij} x_i^2x_j^2=0$ since $(\Tilde{A}_{ij})$ is a skew-symmetric matrix. Hence, $ \Big(1-\sum\limits_{k=1}^{n+1} x^2_k\Big)$ divides $\Big(\sum\limits_{i=1}^{n+1}x_i^2-r^2\Big)$ if $K$ is quadratic homogeneous. This contradicts the assumption that $r\neq 1$. So, $K$ must be zero. Consequently, $\alpha_i=0$ for $i=1,\ldots, n+1$. Therefore, $\chi$ is homogeneous and $g_r$ is a first integral.
\end{proof}

\begin{proof}[\textbf{Proof of \Cref{thm_indp_fi}}]
    Suppose that ${\rm rank}(B)\leq 2$. Then, the matrix equation $yB=0$ has at least $n$ independent solutions, say $y_1,\ldots,y_n$ where $y_i=\begin{pmatrix}
        y_{i1}&y_{i2}&\cdots&y_{i(n+2)}
    \end{pmatrix}$ for $i=1,\ldots,n$. From the proof of \Cref{first-integral-ns}, we recall that if $y$ is solution of $yB=0$ then $\langle y,(K_1,K_2,\ldots,K_{n+2})\rangle=0$ where $K_i$ is the cofactor of $x_i=0$ for $i=1,\ldots,n+1$ and $K_{n+2}$ is the cofactor of $g_{n+2}=0$. Hence,
    $$\sum_{j=1}^{n+2} y_{ij}K_j=0~\mbox{for}~1\leq i\leq n.$$
    It follows that $H_i=g_{n+2}^{y_{i(n+2)}}\prod\limits_{j=1}^{n+1}x_j^{y_{ij}}$ is a first integral of $\chi$ for $i=1,\ldots,n$. We claim that $H_1,\ldots, H_n$ are functionally independent.

    We compute $\frac{\partial H_i}{\partial x_k}=(\frac{g_{n+2}}{x_k} y_{ik}+ y_{i(n+2)}\frac{\partial g_{n+2}}{\partial x_k})g_{n+2}^{y_{i(n+2)}-1}\prod\limits_{j=1}^{n+1}x_j^{y_{ij}}$. We define $Y:=(y_{ij})_{n\times (n+2)}$ such that $\bf y_1,\ldots ,\bf y_{n+2}$ are the columns of $Y$. Suppose that $J:=\frac{\partial(H_1,\ldots,H_n)}{\partial (x_1,\ldots,x_{n+1})}$ having columns $J_1,\ldots,J_{n+1}$. Then $$J_p={\rm diag} (g_{n+2}^{y_{1(n+2)}-1}\prod\limits_{j=1}^{n+1}x_j^{y_{1j}},\ldots,g_{n+2}^{y_{n(n+2)}-1}\prod\limits_{j=1}^{n+1}x_j^{y_{nj}})  (\frac{g_{n+2}}{x_p}{\bf y_p} +\frac{\partial g_{n+2}}{\partial x_p}\bf y_{n+2}),$$ for $1\leq p\leq n+1$. We have to prove that ${\rm rank}(J)$ is $n$ at some point. Consider the matrix $\Tilde{J}$ having columns $\Tilde{J}_1,\ldots, \Tilde{J}_n$ where $\Tilde{J}_p:=\frac{g_{n+2}}{x_p}{\bf y_p} +\frac{\partial g_{n+2}}{\partial x_p}\bf y_{n+2}$ where $1\leq p\leq n$. Note that ${\rm diag} (g_{n+2}^{y_{1(n+2)}-1}\prod\limits_{j=1}^{n+1}x_j^{y_{1j}},\ldots,g_{n+2}^{y_{n(n+2)}-1}\prod\limits_{j=1}^{n+1}x_j^{y_{nj}})$ is an invertible matrix at each $z_{ij}$ for $i,j\in \{1,\ldots,n+1\}$. Therefore, ${\rm rank}(J)={\rm rank}(\Tilde{J})$ at each $z_{ij}$. We prove that ${\rm rank}(\Tilde{J})=n$ at some $z_{ij}$.
    
    If ${\bf y_{n+2}}=0$ then ${\rm rank}(\Tilde{J})=n$ at each $z_{ij}$ since ${\rm rank}(Y)=n$. If ${\bf y_{n+2}}\neq 0$ then without loss of generality, we assume that ${\bf y_3,\ldots, y_{n+2}}$ are independent column vectors. Then, $\Tilde{J}_3,\ldots,\Tilde{J}_{n+1}$ are independent columns vectors at each $z_{ij}$. We claim that $\Tilde{J}_2,\ldots,\Tilde{J}_{n+1}$ are independent at $z_{1j}$, for $1\leq j\leq n+1$. Similarly, we can also prove $\Tilde{J}_1, \Tilde{J}_3,\ldots,\Tilde{J}_{n+1}$ are independent at $z_{2j}$, for $1\leq j\leq n+1$. Let $z_{1j}:=(s^j_{11},\ldots,s^j_{1(n+1)})$.

If ${\bf y_2}=0$ then $\sum\limits_{i=2}^{n+1} c_i\Tilde{J}_i(z_{1j})=0$ gives $g_{n+2}(z_{1j})\sum\limits_{k=3}^{n+1} \frac{c_k}{s^j_{1k}}{\bf y_k}+(\sum\limits_{k=2}^{n+1} c_k\frac{\partial g_{n+2}}{\partial x_k}(z_{1j})){\bf y_{n+2}}=0$. Hence, $c_k=0$ for $k=2,\ldots,n+1$. Therefore, $\Tilde{J}_2,\ldots,\Tilde{J}_{n+1}$ are independent at $z_{1j}$.

Now, consider ${\bf y_2}\neq 0$. Suppose that there exist $d_2,\ldots,d_{n+1}\in \RR$, not all zero, such that
\begin{align*}
        &\sum\limits_{k=2}^{n+1} d_k\Tilde{J}_k(z_{1j})=0\\
      \implies& g_{n+2}(z_{1j})\sum\limits_{k=2}^{n+1} \frac{d_k}{s^j_{1k}}{\bf y_k}+(\sum\limits_{k=2}^{n+1} d_k\frac{\partial g_{n+2}}{\partial x_k}(z_{1j})){\bf y_{n+2}}=0 \\
    \implies& \begin{pmatrix}
        {\bf y_2}&\ldots&{\bf y_{n+2}}
    \end{pmatrix}\begin{pmatrix}
        \frac{d_2}{s^j_{12}}g_{n+2}(z_{1j})&\ldots&\frac{d_{n+1}}{s^j_{1(n+1)}}g_{n+2}(z_{1j})&\sum\limits_{k=2}^{n+1} d_k\frac{\partial g_{n+2}}{\partial x_k}(z_{1j})
    \end{pmatrix}^t=0.
\end{align*}
Rank of the matrix $\begin{pmatrix}
        {\bf y_2}&\ldots&{\bf y_{n+2}}
    \end{pmatrix}$ is $n$. So, the nullity of it is 1, say $\begin{pmatrix}
        p_2&\ldots&p_{n+2}
    \end{pmatrix}^t$ is a basis vector of the null space. Therefore,
    $$\begin{pmatrix}
        \frac{d_2}{s^j_{12}}g_{n+2}(z_{1j})&\ldots&\frac{d_{n+1}}{s^j_{1(n+1)}}g_{n+2}(z_{1j})&\sum\limits_{k=2}^{n+1} d_k\frac{\partial g_{n+2}}{\partial x_k}(z_{1j})
    \end{pmatrix}^t=\alpha \begin{pmatrix}
        p_2&\ldots&p_{n+2}
    \end{pmatrix}^t,$$
    for some $\alpha\in \RR\setminus \{0\}$. Hence, we obtain $p_{n+2}=\frac{1}{g_{n+2}(z_{1j})}(\sum\limits_{k=2}^{n+1}p_ks^j_{1k}\frac{\partial g_{n+2}}{\partial x_k}(z_{1j}))$. Thus, 
    $$\begin{pmatrix}
        s^j_{12}\frac{\partial g_{n+2}}{\partial x_1}(z_{1j}) &\ldots&s^j_{1(n+1)}\frac{\partial g_{n+2}}{\partial x_{n+1}}(z_{1j})&-g_{n+2}(z_{1j})
    \end{pmatrix}^t$$ is a null space solution of $\begin{pmatrix}
        p_2&\ldots&p_{n+2}
    \end{pmatrix}$, for $1\leq j\leq n+1$. By the hypothesis, $\begin{pmatrix}
        p_2&\ldots&p_{n+2}
    \end{pmatrix}$ has $n+1$ independent null space solutions, which is impossible. So, our assumption is wrong. Therefore, $\Tilde{J}_2,\ldots,\Tilde{J}_{n+1}$ are independent at $z_{1j}$, for $1\leq j\leq n+1$. Hence, ${\rm rank}(\Tilde{J})=n$ at $z_{1j}$.

    Now, we assume that $H_i$ are functionally independent first integrals of $\chi$. By Darboux Integrability Theory \cite{Dar78, DLA06}, $y_i$ is a non-trivial solution of $yB=0$ where $y_i=\begin{pmatrix}
        y_{i1}&y_{i2}&\cdots&y_{i(n+2)}
    \end{pmatrix}$ for $i=1,\ldots,n$. We claim that $y_1,\ldots,y_n$ are linearly independent. If not, assume that $\sum\limits_{i=1}^n c_iy_i=0$ where not all $c_i$ are zero. Hence, $\prod\limits_{i=1}^n H_i^{c_i}-1=0$. This implies $H_1,\ldots, H_n$ are functionally dependent, which is a contradiction. Hence, ${\rm rank}(B)\leq 2$. Thus, the proof is complete.
\end{proof}

\begin{remark}
    By the hypothesis of \Cref{thm_indp_fi}, $z_{ij}$ must be distinct for any fixed $i$ when $j$ varies. However, $z_{i_1j_1}$ can be equal to $z_{i_2j_2}$ for some $i_1,i_2,j_1,j_2\in \{1,\ldots,n+1\}$ with $i_1\neq i_2$.
\end{remark}
\begin{corollary}\label{cor:ind-fi}
Suppose that $g_{n+2}:=\sum\limits_{i=1}^{n+1} x_i^2-1$ in \Cref{first-integral-ns}, and the rank of the matrix $B$ in \eqref{eq:matrix-b} is less than or equal to 2 for the cubic vector field \eqref{eq:deg-m-form}. Then the vector field is completely integrable in $\RR^{n+1}\setminus \{x_1 \cdots x_{n+1}=0\}$.
\end{corollary}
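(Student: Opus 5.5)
With $g_{n+2}=\sum_{i=1}^{n+1}x_i^2-1$, the $n$-sphere is invariant with constant multiple of a quadratic cofactor, and the plan is to feed it into \Cref{thm_indp_fi} (equivalently, the construction in its proof) to obtain $n$ first integrals. Since the ambient space is $\RR^{n+1}$, these $n$ first integrals already give complete integrability. The work is to check the hypotheses of \Cref{thm_indp_fi}, or rather to redo its sufficiency argument directly on the open set $U=\RR^{n+1}\setminus\{x_1\cdots x_{n+1}=0\}$.

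\textbf{Step 1: cofactor of the sphere.} By the converse part of \Cref{thm:kolm-vfld}, the cofactor of $g_{n+2}$ for a cubic field is $K_{n+2}=-2\sum_i\alpha_ix_i^2$. This is of the form $k_0+\sum k_ix_i^2$ with $k_0=0$ and $k_i=-2\alpha_i$, so $B$ in \eqref{eq:matrix-b} is defined. Since ${\rm rank}(B)\le 2$, the system $yB=0$ has $n$ linearly independent solutions $y_1,\ldots,y_n$. The Darboux argument of \Cref{first-integral-ns} then gives first integrals $H_i=g_{n+2}^{y_{i(n+2)}}\prod_j x_j^{y_{ij}}$, defined on $U$ away from the sphere. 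The sphere $\{g_{n+2}=0\}$ is a measure-zero invariant set, so it is harmless: functional independence only has to hold at some point.

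\textbf{Step 2: functional independence.} I would verify the hypothesis of \Cref{thm_indp_fi} for this $g_{n+2}$. At a point $z$, the relevant vector is
\[
\bigl(2z_1^2,\ldots,\widehat{2z_i^2},\ldots,2z_{n+1}^2,\;1-\textstyle\sum_k z_k^2\bigr).
\]
Choose the points $z_{ij}$ with nonzero coordinates off the sphere. The independence of $n+1$ such vectors in $\RR^{n+1}$ then reduces to the independence of the vectors $(w_1,\ldots,w_n,\,1-\tfrac12\sum w - z_i^2)$ with $w_k=2z_k^2>0$. This can be arranged generically, for instance by taking the $w$-parts as perturbations of a basis and checking that a determinant is a nonzero polynomial in the $z$'s. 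Nonvanishing of $\partial g_{n+2}/\partial x_i=2x_i$ and of $g_{n+2}$ at these points is automatic for such choices. \Cref{thm_indp_fi} then gives functional independence of $H_1,\ldots,H_n$.

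\textbf{Step 3: conclusion.} With $n=(n+1)-1$ independent first integrals on $U$, the field is completely integrable there by Definition \ref{def_comp_intg}. The main obstacle is Step 2: one has to exhibit points, or argue genericity, so that the determinant is not identically zero. The cleanest route is probably a direct computation of the Jacobian minor of $(H_1,\ldots,H_n)$ at a single point, using the rank-$n$ matrix $Y$ as in the proof of \Cref{thm_indp_fi}, so that the required nonvanishing is a polynomial nonidentity.
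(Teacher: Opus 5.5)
Your route is the paper's: the cofactor of the sphere is $-2\sum_i\alpha_ix_i^2$ (so $k_0=0$, $k_i=-2\alpha_i$), $\mathrm{rank}(B)\le 2$ gives $n$ independent solutions of $yB=0$ and hence Darboux integrals $H_i$, and functional independence is meant to come from the point hypothesis of \Cref{thm_indp_fi}. The gap is that this last verification is only asserted (``can be arranged generically''), and you leave it flagged as the open obstacle. Yet it is the entire content of the paper's proof.

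It is easy to close. The paper uses, for every $i$, the same points $z_j=(1,\ldots,1,2,1,\ldots,1)$, with the $2$ in slot $j$. These have nonzero coordinates, $\frac{\partial g_{n+2}}{\partial x_i}(z_j)\neq 0$ and $g_{n+2}(z_j)=n+3\neq 0$. Fix $i$, let $\mathbf{1}\in\RR^n$ be the all-ones vector, and let $e_j$ be the standard basis vectors reindexed after deleting slot $i$. The vectors $\bigl((2x_k^2)_{k\neq i},-g_{n+2}\bigr)$ at $z_1,\ldots,z_{n+1}$ are then $(2\mathbf{1}+6e_j,\,-(n+3))$ for $j\neq i$ and $(2\mathbf{1},\,-(n+3))$ for $j=i$. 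Subtracting the latter from the others leaves rows $(6e_j,0)$, so the determinant is $\pm 6^n(n+3)\neq 0$.

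Your genericity idea also works once stated properly. Since $x_i$ enters only the last entry $1-\frac12\sum_{k\neq i}w_k-x_i^2$, the last column can be prescribed freely in open intervals. The determinant is linear in that column, and it is a nonzero linear function as soon as the $w$-parts span $\RR^n$.

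Your fallback in Step 3, a Jacobian minor ``at a single point'', cannot be carried out with a point fixed in advance, because $Y$ depends on the $\alpha_i$ and on $\Tilde{A}$. That is exactly why \Cref{thm_indp_fi} asks for $n+1$ points for each $i$. If you want to bypass that theorem, the correct formulation is as follows. Where all $x_k>0$ and $g_{n+2}>0$, the Jacobian of $(\log H_1,\ldots,\log H_n)$ equals $YD(x)$. Here $D(x)$ is the $(n+2)\times(n+1)$ matrix with rows $e_k^t/x_k$ for $1\le k\le n+1$ and last row $2x^t/g_{n+2}$. Its column space is $\nu(x)^{\perp}$, where $\nu(x)=(2x_1^2,\ldots,2x_{n+1}^2,-g_{n+2}(x))$, so $\mathrm{rank}(YD(x))=n$ if and only if $\nu(x)\notin\mathrm{row}(Y)$. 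The functions $x_1^2,\ldots,x_{n+1}^2,\,1-\sum_k x_k^2$ are linearly independent on that open region, so the vectors $\nu(x)$ span $\RR^{n+2}$. Hence some admissible $x$ lies outside the $n$-dimensional space $\mathrm{row}(Y)$, which gives functional independence directly.
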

\begin{proof}
The $n$-sphere given by $g_{n+2}=0$ is invariant under the cubic vector field \eqref{eq:deg-m-form} with cofactor $-2 \sum\limits_{i=1}^{n+1} \alpha_i x_i^2$. So, $k_0=0$ and $k_i=-2\alpha_i$ for $i\in \{1,\ldots, n+1\}$ in matrix $B$. We choose $z_1,\ldots,z_{n+1}\in \RR^{n+1}\setminus \SN^{n}$, where the $j$-th coordinate of $z_j$ is 2 and all other coordinates are 1, for each ${j\in \{1,\ldots,n+1\}}$. For each $i\in \{1,\ldots, n+1\}$, evaluating the vector
    $$\begin{pmatrix}
        x_1\frac{\partial g_{n+2}}{\partial x_1} & \ldots&x_{i-1}\frac{\partial g_{n+2}}{\partial x_{i-1}}&x_{i+1}\frac{\partial g_{n+2}}{\partial x_{i+1}}&\ldots&x_{n+1}\frac{\partial g_{n+2}}{\partial x_{n+1}}&-g_{n+2}
    \end{pmatrix}$$
 at the points $z_1,\ldots,z_{n+1}$, we obtain the row vectors of the matrix
$$M=\begin{pmatrix}
    8&2&2&\cdots&2&-n-3\\
    2&8&2&\cdots&2&-n-3\\
    2&2&8&\cdots&2&-n-3\\
    \vdots&\vdots&\vdots&\vdots&\vdots&\vdots\\
    2&2&2&\cdots&8&-n-3\\
    2&2&2&\cdots&2&-n-3
\end{pmatrix}_{(n+1)\times (n+1)}.$$
We claim that $\det (M)\neq 0$. After subtracting the $(n+1)$-th row from the first row, we have $$\det (M)=\begin{vmatrix}
    6&0&0&\cdots&0&0\\
    2&8&2&\cdots&2&-n-3\\
    2&2&8&\cdots&2&-n-3\\
    \vdots&\vdots&\vdots&\vdots&\vdots&\vdots\\
    2&2&2&\cdots&8&-n-3\\
    2&2&2&\cdots&2&-n-3
\end{vmatrix}_{(n+1)\times (n+1)}.$$
Therefore, $\det (M)=6 \begin{vmatrix}
    8&2&2&\cdots&2&-n-3\\
    2&8&2&\cdots&2&-n-3\\
    2&2&8&\cdots&2&-n-3\\
    \vdots&\vdots&\vdots&\vdots&\vdots&\vdots\\
    2&2&2&\cdots&8&-n-3\\
    2&2&2&\cdots&2&-n-3
\end{vmatrix}_{n\times n}$, and inductively, $$\det (M)=6^{n-1}\begin{vmatrix}
    8&-n-3\\
    2&-n-3
\end{vmatrix}=-6^n(n+3).$$
Hence, by \Cref{thm_indp_fi}, the vector field is completely integrable in the open set $\RR^{n+1}\setminus \{x_1\cdots x_{n+1}=0\}$.
\end{proof}

\begin{example}
    Consider the vector field $\chi=(P_1,P_2,P_3)$ on $\SN^2$ defined by 
    \begin{align*}
            P_1&=x_1\Big(\beta\Big(1-\sum\limits_{i=1}^3 x_i^2\Big)+\alpha x_2^2\Big),\\
            P_2&=-\alpha x_2(x_1^2+x_3^2),\\
            P_3&=x_3\Big(\beta\Big(1-\sum\limits_{i=1}^3 x_i^2\Big)+\alpha x_2^2\Big),
    \end{align*}
    where $\alpha,\beta \in \RR\setminus \{0\}$. Assume that $g_4:=x_1^2+x_2^2+x_3^2-1$. We compute the associated matrix
$$B=\begin{bmatrix}
    \beta&-\beta&\alpha-\beta&-\beta\\
    0&-\alpha&0&-\alpha\\
    \beta&-\beta&\alpha-\beta&-\beta\\
    0&-2\beta&0&-2\beta\\
\end{bmatrix}.$$ Observe that ${\rm rank}(B)=2$. Moreover, $\begin{pmatrix}
    1& 0& -1& 0
\end{pmatrix}$ and $\begin{pmatrix}
    0& -2\beta& 0& \alpha
\end{pmatrix}$ are two linearly independent solutions of $yB=0$. Therefore, the functions $x_1x_3^{-1}$ and $g_4^{\alpha}x_2^{-2\beta}$ are two functionally independent first integrals of $\chi$.

On the other hand, consider the points $z_1=(2,1,1,)$, $z_2=(1,2,1)$, and $z_3=(1,1,2)$. Then, for each $i\in \{1,2,3\}$, the vectors
$$\begin{pmatrix}
        x_1\frac{\partial g_4}{\partial x_1} & \ldots&x_{i-1}\frac{\partial g_4}{\partial x_{i-1}}&x_{i+1}\frac{\partial g_4}{\partial x_{i+1}}&\ldots&x_4\frac{\partial g_4}{\partial x_4}&-g_4
    \end{pmatrix}$$
evaluated at $z_1,z_2,z_3$ are linearly independent (See the proof of \Cref{cor:ind-fi}). Hence, the hypothesis of \Cref{thm_indp_fi} is satisfied.
\end{example}

\noindent {\bf Acknowledgments.}
The first author was supported by the Prime Minister's Research Fellowship, Government of India (Grant No. SB22230339MAPMRF008771). The second author thanks the `ICSR office at IIT Madras' for the SEED research grant (Grant No. IP21220071MANFSC008771) and SERB (Now ANRF) India for the CRG Grant (CRG/2023/000239).


\end{document}